\newcommand{\aspas}[1]{``{#1}''}
\newtheorem{theorem}{Theorem}[section]
\newtheorem{lemma}[theorem]{Lemma}
\newtheorem{example}[theorem]{Example}
\newtheorem{proposition}[theorem]{Proposition}
\theoremstyle{definition}
\newtheorem{definition}[theorem]{Definition}
\newtheorem{remark}[theorem]{Remark}
\newtheorem{corollary}[theorem]{Corollary}
\numberwithin{equation}{section}
\begin{document}


\renewcommand{\bf}{\bfseries}
\renewcommand{\sc}{\scshape}

\title[(Injective) facet-complexity between simplicial complexes]%
{(Injective) facet-complexity between simplicial complexes}

\author[C. A. Ipanaque Zapata]{Cesar A. Ipanaque Zapata}
\address[C. A. Ipanaque Zapata]{Departamento de Matem\'atica, IME-Universidade de S\~ao Paulo, Rua do Mat\~ao, 1010 CEP: 05508-090, S\~ao Paulo, SP, Brazil}
\email{cesarzapata@usp.br}


\author[A. Borat]{Ayse Borat}%
\address[A. Borat]{Bursa Technical University, Faculty of Engineering and Natural Sciences, Department of Mathematics, Bursa, Turkiye}
\email{ayse.borat@btu.edu.tr}

\subjclass[2020]{Primary 05E45, 05C15; Secondary 05C51, 05C60, 05C90.} 

\keywords{Simplicial complex, (facet) strict simplicial map, chromatic number, (injective) hom-complexity, (injective) facet-complexity}

\begin{abstract} We present the notion of facet-complexity, $\text{C}(\mathsf{L};\mathsf{K})$, for two simplicial complexes $\mathsf{L}$ and $\mathsf{K}$, along with basic results for this numerical invariant. This invariant $\text{C}(\mathsf{L};\mathsf{K})$ quantifies the \aspas{complexity} of the following question: When does there exist a facet simplicial map $\mathsf{L}\to \mathsf{K}$? A facet simplicial map is a simplicial map that preserves non-unitary facets. Likewise, we introduce the notion of injective facet-complexity, $\text{IC}(\mathsf{L};\mathsf{K})$. These invariants generalize the notion of (injective) hom-complexity between graphs, recently introduced by Zapata et al.  We demonstrate a triangular inequality for (injective) facet-complexity and show that it is a simplicial complex invariant. Additionally, these invariants provide an obstruction to the existence of facet simplicial maps. We explore the sub-additivity of (injective) facet-complexity and we present a lower bound in terms of the chromatic number. Moreover, we provide an upper bound for $\mathrm{C}(\mathsf{L};\mathsf{H})$ in terms of the number of facets of $L$. Finally, we establish a  formula for $\mathrm{IC}(\mathsf{L};\mathsf{K})$ when $\mathsf{L}$ is a pure simplicial complex and $K$ is a complete simplicial complex.
\end{abstract}
\maketitle


\section{Introduction}\label{secintro}%
In this article, the term \aspas{simplicial complex} refers to an abstract simplicial complex. For more details, see Section~\ref{sec:pre}. The symbol $\lceil m\rceil$ denotes the least integer greater than or equal to $m$, while $\lfloor m\rfloor$ denotes the greatest integer less than or equal to $m$.    

\medskip Let $\mathsf{L}$ and $\mathsf{K}$ be simplicial complexes. For the purpose of this work, we present the following notion. A \textit{facet simplicial map} of $\mathsf{L}$ to $\mathsf{K}$, written as $f:\mathsf{L}\stackrel{\mathrm{facet}}{\to}\mathsf{K}$, is a simplicial map $f:\mathsf{L}\to \mathsf{K}$ such that $f(F)$ is a non-unitary facet (i.e., non-unitary maximal face) of $\mathsf{K}$ whenever $F$ is a non-unitary facet of $\mathsf{L}$ (see Definition~\ref{defn:facet-map}). The symbol $\mathsf{L}\stackrel{\mathrm{facet}}{\to} \mathsf{K}$ means that there is a facet simplicial map from $\mathsf{L}$ to $\mathsf{K}$; otherwise, we write $\mathsf{L}\stackrel{\mathrm{facet}}{\not\to} \mathsf{K}$, as explained in Section~\ref{sec:pre}. 
 
\medskip Given two simplicial complexes $\mathsf{L}$ and $\mathsf{K}$ without isolated vertices, it is natural to pose the following question: When is there a facet simplicial complex $\mathsf{L}\stackrel{\mathrm{facet}}{\to}\mathsf{K}$? 
In the case that both $\mathsf{L}$ and $\mathsf{K}$ are simplicial complexes of dimension one (i.e., they can be seen as graphs), this question represents a significant challenge in graph theory (see \cite{hell1990}).

\medskip Motivated by this question in graph theory, the notion of (injective) hom-complexity between graphs was recently introduced in \cite{zapata2024}. In this work, we extend this notion to higher dimensions. We introduce the notion of facet-complexity between two simplicial complexes  $\mathsf{L}$ and $\mathsf{K}$, denoted by $\text{C}(\mathsf{L};\mathsf{K})$ (Definition~\ref{defn:complexity}), along with its basic results. More precisely, $\text{C}(\mathsf{L};\mathsf{K})$ is defined as the least positive integer $\ell$ such that there are $\ell$ distinct subcomplexes $\mathsf{L}_j$ of $\mathsf{L}$ with $\mathsf{L}=\mathsf{L}_1\cup\cdots\cup \mathsf{L}_\ell$, and over each $\mathsf{L}_j$, there exists a facet simplicial map $\mathsf{L}_j\to \mathsf{K}$. For instance, we have $\text{C}(\mathsf{L};\mathsf{K})=1$ if and only if there is a facet simplicial map $\mathsf{L}\stackrel{\mathrm{facet}}{\to}\mathsf{K}$. Likewise, we introduce the notion of injective facet-complexity, $\text{IC}(\mathsf{L};\mathsf{K})$. 

\medskip Also, we discuss the notion of strict simplicial map, strict chromatic number (Remark~\ref{rem:strict-chroma}), and (injective) strict-complexity (Remark~\ref{rem:ineq-complex-q}). We believe that this strict version for facet-complexity coincides with the hom-complexity of underlying graphs.  

\medskip This work is also motivated by a fundamental curiosity to present a well-defined methodology that can help address the \aspas{complexity} of the data migration problem in higher cases \cite{hussein2021}, \cite{spivak2012}. 

\medskip The main results of this work are:
\begin{itemize}
    \item Introduction of the concepts of facet-complexity $\text{C}(\mathsf{L};\mathsf{K})$ and injective facet-complexity $\text{IC}(\mathsf{L};\mathsf{K})$ for two simplicial complexes $\mathsf{L}$ and $\mathsf{K}$ (Definition~\ref{defn:complexity}).
     \item A triangular inequality (Theorem~\ref{thm:inequality-three-graphs}). 
    \item The existence of a facet simplicial map implies inequalities between the facet-complexities. Likewise, the existence of an injective facet  simplicial map implies inequalities between the injective facet-complexities (Theorem~\ref{prop:complexity-subgraphs}). In particular, this shows  that (injective) facet-complexity is a simplicial complex invariant. It also implies that (injective) facet-complexity provides a numerical obstruction to the existence of a facet simplicial map. 
    \item Sub-additivity (Theorem~\ref{thm:category-union}).
    \item A lower bound (Theorem~\ref{thm:lower-bound}).
    \item An upper bound (Theorem~\ref{thm:uper-bound}).
    \item A formula for $\mathrm{IC}(\mathsf{L};\mathsf{K})$ whenever $\mathsf{L}$ is a pure simplicial complex and $K$ is a complete simplicial complex (Proposition~\ref{prop:k-complete-facet}). 
\end{itemize} 

The paper is organized as follows: We begin with a brief review of simplicial complexes and facet simplicial maps (Section~\ref{sec:pre}). We state and prove Proposition~\ref{prop:chromatic-union}, which is fundamental in Theorem~\ref{thm:lower-bound}. In Section~\ref{sec:one}, we introduce the notions of facet-complexity $\text{C}(\mathsf{L};\mathsf{K})$ and injective facet-complexity $\text{IC}(\mathsf{L};\mathsf{K})$ for two simplicial complexes $\mathsf{L}$ and $\mathsf{K}$ (Definition~\ref{defn:complexity}). Theorem~\ref{prop:complexity-subgraphs} presents inequalities between the facet-complexities under the existence of a facet simplicial map. In particular, Corollary~\ref{cor:invariant-iso-complexity} shows that facet-complexity is a simplicial complex invariant. Proposition~\ref{prop:no-existencia} states that facet-complexity provides a numerical obstruction to the existence of a facet simplicial map. Furthermore, Theorem~\ref{thm:category-union} demonstrates the sub-additivity of (injective) facet-complexity. A lower bound in terms of chromatic number is provided in Theorem~\ref{thm:lower-bound}. Additionally, Theorem~\ref{thm:uper-bound}  provides an upper bound for $\mathrm{C}(\mathsf{L};\mathsf{H})$ in terms of the number of facets of $L$. Also, in Proposition~\ref{prop:k-complete-facet}, we present a formula for $\mathrm{IC}(\mathsf{L};\mathsf{K})$ whenever $K$ is a complete simplicial complex. We close this section with Remark~\ref{rem:future-work}, which presents directions for future work.

\section{Simplicial complexes and facet simplicial maps} \label{sec:pre}
In this section, we recall some definitions and we fix the notations. We follow the standard notation for simplicial complexes as used in \cite[Section 1.5, p. 13]{matousek2003}. An \textit{abstract simplicial complex} is a pair $(V,\mathsf{K})$, where $V=V(\mathsf{K})$ is a set of \textit{vertices}, and $\mathsf{K}\subseteq 2^V$ is a set of \textit{simplices}, such that if $F\in \mathsf{K}$ and $G\subseteq F$, then $G\in \mathsf{K}$ \cite[Definition 1.5.1, p. 13]{matousek2003}. In this case, such $G$ is called a \textit{face} of the simplex $F$. A \textit{facet} is a maximal simplex, i.e., any simplex in a complex that is not a face of any larger simplex. In this article, the term \aspas{simplicial complex} refers to an abstract simplicial complex.  

\medskip Usually we may assume that $V=\bigcup \mathsf{K}$; thus it suffices to write $\mathsf{K}$ instead of $(V,\mathsf{K})$, where $V$ is understood to equal $\bigcup \mathsf{K}$. 

\medskip Let $\mathsf{K}$ be a simplicial complex. The \textit{dimension} of a simplex $F\in \mathsf{K}$ is given by $\dim(F)=|F|-1$, and the \textit{dimension} of $\mathsf{K}$ by $\dim(\mathsf{K})=\max\{\dim(F):\quad F\in \mathsf{K}\}$ \cite[Definition 1.5.1, p. 13]{matousek2003}.  

\medskip We shall use the simplified notation for simplices, where $v_1\cdots v_m$ represents the simplex $\{v_1,\ldots,v_m\}$. We have $v_1\cdots v_m=v_{\sigma(1)}\cdots v_{\sigma(m)}$ for any permutation $\sigma\in S_m$. If $u,v\in F$ for some simplex $F$, we say that $u$ and $v$ are \textit{adjacent}, we also say that $u$ and $v$ are \textit{neighbours}. If $u,v\in F$ for some $d$-dimensional simplex $F$, we say that $u$ and $v$ are $d$-adjacent. The number of neighbours of $v$ (other than $v$) is called the \textit{degree} of $v$; the number of $d$-neighbours of $v$ (other than $v$) is called the \textit{$d$-degree} of $v$. Furthermore, $\text{deg}_d(v)$, and $\text{deg}(v)$ denote the $d$-degree, and degree of vertex $v$, respectively. Note that $\text{deg}(v)\leq\sum_{d\geq 1}\text{deg}_d(v)$. A vertex $v$ is called \textit{isolated} if $\text{deg}(v)=0$.  

\medskip We say that a simplicial complex $\mathsf{L}$ is a \textit{subcomplex} of $\mathsf{K}$ if $\mathsf{L}\subseteq \mathsf{K}$ (and of course $V(\mathsf{L})\subseteq V(\mathsf{K})$). A subcomplex $\mathsf{L}$ of $\mathsf{K}$ is called a \textit{spanning subcomplex} if $V(\mathsf{L})=V(\mathsf{K})$. Additionally, $\mathsf{L}$ is an \textit{induced subcomplex} of $\mathsf{K}$ if it is a subcomplex of $\mathsf{K}$ and contains all the simplices of $\mathsf{K}$ among the vertices in $\mathsf{L}$. We say that a simplicial complex $\mathsf{K}$ is \textit{complete} if $\mathsf{K}=2^V$. A \textit{clique} in a simplicial complex $\mathsf{K}$ is a complete subcomplex of $\mathsf{K}$. The symbol $\Gamma_n$ denotes the complete simplicial complex on $n$ vertices, while $\mathsf{K}_n$ denotes the simplicial complex on $n$ vertices given by $\mathsf{K}_n=\{F\in 2^n:\quad |F|\leq n-1\}$. Note that $\dim(\Gamma_n)=n-1$ and $\dim(\mathsf{K}_n)=n-2$. Furthermore, $V(\Gamma_n)$ is the only facet of $\Gamma_n$, while $\mathsf{K}_n$ has $n$ facets.  

\medskip  Let $\mathsf{L}$ and $\mathsf{K}$ be simplicial complexes. A \textit{simplicial map} of $\mathsf{L}$ to $\mathsf{K}$, written as $f:\mathsf{L}\to \mathsf{K}$, is a mapping $f:V(\mathsf{L})\to V(\mathsf{K})$ such that $f(F)\in \mathsf{K}$  whenever $F\in \mathsf{L}$ \cite[Definition 1.5.2, p. 14]{matousek2003}. We call a simplicial map $f:\mathsf{L}\to \mathsf{K}$ \textit{injective}, \textit{surjective}, or \textit{bijective} if the mapping $f:V(\mathsf{L})\to V(\mathsf{K})$ is injective, surjective, or bijective, respectively. A bijective simplicial map $f:\mathsf{L}\to \mathsf{K}$ whose inverse map $f^{-1}:V(\mathsf{K})\to V(\mathsf{L})$ is also a simplicial map is called an \textit{isomorphism}, and that $\mathsf{L}$ and $\mathsf{K}$ are \textit{isomorphic}. 

\medskip For this paper, we introduce the following concept. 

\begin{definition}[(Facet) strict simplicial map]\label{defn:facet-map}
  Let $\mathsf{L}$ and $\mathsf{K}$ be simplicial complexes. \begin{enumerate}
  \item[(1)] A \textit{facet simplicial map} of $\mathsf{L}$ to $\mathsf{K}$, written as $f:\mathsf{L}\stackrel{\mathrm{facet}}{\to} \mathsf{K}$, is a simplicial map $f:\mathsf{L}\to\mathsf{K}$ such that $f(F)$ is a non-unitary facet of $\mathsf{K}$ whenever $F$ is a non-unitary facet of $\mathsf{L}$. Hence, facet simplicial maps of simplicial complexes is a simplicial map preserving the non-unitary facets. 
      \item[(2)] A \textit{strict simplicial map} of $\mathsf{L}$ to $\mathsf{K}$, written as $f:\mathsf{L}\stackrel{\text{s}}{\to} \mathsf{K}$, is a mapping $f:V(\mathsf{L})\to V(\mathsf{K})$ such that $f(F)\in \mathsf{K}$ is a simplex of dimension $d$ whenever $F\in \mathsf{L}$ is a simplex of dimension $d$. Hence, strict simplicial maps of simplicial complexes preserve the dimension of the simplices. Note that, any strict simplicial map is a simplicial map. Also, any injective simplicial map is a strict simplicial map.    
  \end{enumerate} 
\end{definition} 

The symbol $\mathsf{L}\stackrel{\mathrm{facet}}{\to} \mathsf{K}$ indicates that there exists a facet simplicial map from $\mathsf{L}$ to $\mathsf{K}$, and in this case, we say that $\mathsf{L}$ is \textit{facet $\mathsf{K}$-colourable}; otherwise, we write $\mathsf{L}\stackrel{\text{facet}}{\not\to} \mathsf{K}$. Likewise, $\mathsf{L}\stackrel{\text{s}}{\to} \mathsf{K}$ indicates that there exists a strict simplicial map from $\mathsf{L}$ to $\mathsf{K}$, and in this case, we say that $\mathsf{L}$ is \textit{strict $\mathsf{K}$-colourable}; otherwise, we write $\mathsf{L}\stackrel{\text{s}}{\not\to} \mathsf{K}$. Observe that if $\mathsf{L}\stackrel{\text{s}}{\to} \mathsf{K}$, then $\dim(\mathsf{L})\leq \dim(\mathsf{K})$. Also, note that if we remove or add isolated vertices from $\mathsf{L}$, its (facet, respectively) strict $\mathsf{K}$-colorability does not change. Given a subcomplex $\mathsf{L}$ of $\mathsf{K}$, the inclusion map $V(\mathsf{L})\hookrightarrow V(\mathsf{K})$ is a strict simplicial map and is called the \textit{inclusion simplicial map} $\mathsf{L}\hookrightarrow \mathsf{K}$. 

\medskip We call a facet simplicial map $f:\mathsf{L}\stackrel{\mathrm{facet}}{\to} \mathsf{K}$ \textit{injective}, \textit{surjective}, or \textit{bijective} if the simplicial map $f:\mathsf{L}\to \mathsf{K}$ is injective, surjective, or bijective, respectively. A bijective facet simplicial map $f:\mathsf{L}\stackrel{\mathrm{facet}}{\to} \mathsf{K}$ whose inverse map $f^{-1}:V(\mathsf{K})\to V(\mathsf{L})$ is also a facet simplicial map is called a \textit{facet isomorphism}, and that $\mathsf{L}$ and $\mathsf{K}$ are \textit{facet isomorphic}. Note that if $f$ is an isomorphism, then $f$ and its inverse $f^{-1}$ are facet simplicial maps. Hence, facet isomorphisms coincide with isomorphisms. 

\medskip Let $\mathsf{K}$ be a simplicial complex. For each $d\geq 0$, let \[F_d(\mathsf{K})=\{F\in \mathsf{K}:\quad \dim(F)=d\},\] the set of all $d$-dimensional simplices of $\mathsf{K}$. Note that $F_0(\mathsf{K})$ corresponds to the vertices $V(\mathsf{K})$ (note that $V(\mathsf{K})=\bigcup F_0(\mathsf{K})$) and $F_d(\mathsf{K})=\emptyset$ for any $d>\dim(\mathsf{K})$. Furthermore, $\mathsf{K}=\bigcup_{d\geq 0}F_d(\mathsf{K})$. 

\medskip A strict simplicial map $f:\mathsf{L}\stackrel{\text{s}}{\to} \mathsf{K}$ is a mapping from $V(\mathsf{L})$ to $V(\mathsf{K})$, but since it preserves the dimension of the simplices, it also naturally defines a mapping $f^{\#}:=f^{\#}_d:F_d(\mathsf{L})\to F_d(\mathsf{K})$ by setting $f^{\#}(F)=f(F)$ for all $F\in F_d(\mathsf{L})$. We call a strict simplicial map   $f:\mathsf{L}\stackrel{\text{s}}{\to} \mathsf{K}$ \textit{$d$-injective}, \textit{$d$-surjective}, or \textit{$d$-bijective} if the mapping $f^{\#}:F_d(\mathsf{L})\to F_d(\mathsf{K})$ is injective, surjective, or bijective, respectively. A $0$-injective, surjective, or bijective, we call a \text{vertex-injective}, \text{vertex-surjective}, or \text{vertex-bijective}, respectively. A strict simplicial map $f$ is an \textit{injective strict simplicial map}, a \textit{surjective strict simplicial map }, or a \textit{bijective strict simplicial map} if, for each $d$, it is $d$-injective, surjective, or bijective, respectively. Note that if $f:\mathsf{L}\stackrel{\text{s}}{\to} \mathsf{K}$ is a bijective strict simplicial map, the inverse map $f^{-1}:V(\mathsf{K})\to V(\mathsf{L})$ is a strict simplicial map from $\mathsf{K}$ to $\mathsf{L}$, and in this case, we say that $f:\mathsf{L}\stackrel{\text{s}}{\to} \mathsf{K}$ is a \textit{strict isomorphism}, and that $\mathsf{L}$ and $\mathsf{K}$ are \textit{strict isomorphic}. Note that strict isomorphisms coincide with isomorphisms.   

\medskip Note that a strict simplicial map that is vertex-injective is also $d$-injective for each $d\geq 1$ (but not conversely), and as long as $\mathsf{L}$ has no isolated vertices, a strict simplicial map that is $d$-surjective for each $d\geq 1$ is also vertex-surjective (but not conversely). In other words, injective strict simplicial maps are the same as vertex-injective strict simplicial maps, while surjective strict simplicial maps are, in the absence of isolated vertices, the same as $d$-surjective strict simplicial maps for each $d\geq 1$. 

\medskip The following statement is straightforward to verify.

\begin{lemma}\label{lem:hom-degree}
If $f:\mathsf{L}\to \mathsf{K}$ is an injective simplicial map (and of course it is an injective strict simplicial map) and $v\in V(\mathsf{L})$. Then: 
\begin{enumerate}
    \item[(1)] $\mathrm{deg}(f(v))\geq \mathrm{deg}(v)$.
     \item[(2)] $\mathrm{deg}_d(f(v))\geq \mathrm{deg}_d(v)$ for each $d\geq 1$.
\end{enumerate}
\end{lemma}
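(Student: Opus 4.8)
The plan is to show that $f$ induces an injection from the set of neighbours of $v$ into the set of neighbours of $f(v)$, and likewise at each fixed dimension $d$; since an injection never decreases cardinality, both inequalities follow at once.

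For (1), I would write $N(v)=\{u\in V(\mathsf{L}):\ u\neq v,\ u\text{ adjacent to }v\}$, so that $\deg(v)=|N(v)|$, and define $N(f(v))$ analogously in $\mathsf{K}$. First I check that $f$ maps $N(v)$ into $N(f(v))$: if $u\in N(v)$, then $u,v\in F$ for some $F\in\mathsf{L}$, whence $f(u),f(v)\in f(F)\in\mathsf{K}$ because $f$ is simplicial, so $f(u)$ and $f(v)$ are adjacent; and since $u\neq v$ and $f$ is injective on vertices, $f(u)\neq f(v)$, i.e. $f(u)\in N(f(v))$. The restriction $f|_{N(v)}\colon N(v)\to N(f(v))$ is injective because $f$ is, so $\deg(f(v))=|N(f(v))|\geq|N(v)|=\deg(v)$.

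For (2), the argument is identical once \emph{adjacent} is replaced by \emph{$d$-adjacent} and \emph{simplex} by \emph{$d$-dimensional simplex}. The one extra ingredient is dimension preservation: if $u$ is a $d$-neighbour of $v$ witnessed by a $d$-simplex $F$, I must verify that $f(F)$ is again $d$-dimensional. This is exactly where injectivity is used a second time: since $f$ is injective on vertices, $f|_F$ is injective, hence $|f(F)|=|F|$ and $\dim f(F)=\dim F=d$, which is the observation recorded above that an injective simplicial map is automatically strict. Thus $f(u)$ is a $d$-neighbour of $f(v)$, the induced map on $d$-neighbour sets is injective, and $\deg_d(f(v))\geq\deg_d(v)$.

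There is no real obstacle here; the only point deserving attention is to keep the two roles of injectivity apart — once to ensure $f(u)\neq f(v)$ (so that images are genuine neighbours rather than the vertex itself, and the induced map on neighbour sets is one-to-one), and once, in part (2), to ensure that the dimension of the witnessing simplex is preserved, so that \emph{$d$-adjacent} is carried to \emph{$d$-adjacent}.
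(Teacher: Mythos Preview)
Your argument is correct and is precisely the natural verification the paper has in mind; the paper itself gives no proof, merely noting that the statement is straightforward to verify. Your careful separation of the two uses of injectivity (ensuring $f(u)\neq f(v)$, and preserving the dimension of the witnessing simplex) is exactly right.
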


Now, we recall the definition of the union of simplicial complexes.

\begin{definition}[Union of Simplicial Complexes]\label{defn:union-graphs}
\noindent\begin{enumerate}
    \item[(1)] Let $\mathsf{L}_1, \mathsf{L}_2,\ldots, \mathsf{L}_n$ be simplicial complexes. The \textit{union} $\mathsf{L}_1\cup\cdots\cup \mathsf{L}_n$ is defined by $V(\mathsf{L}_1\cup\cdots\cup \mathsf{L}_n)=V(\mathsf{L}_1)\cup\cdots\cup V(\mathsf{L}_n)$, and $F_d(\mathsf{L}_1\cup\cdots\cup \mathsf{L}_n)=F_d(\mathsf{L}_1)\cup\cdots\cup F_d(\mathsf{L}_n)$ for each $d\geq 1$.    
    \item[(2)] Let $\mathsf{L}$ be a simplicial complex and $\mathsf{A},\mathsf{B}$ be subcomplexes of $\mathsf{L}$ such that $V(\mathsf{A})\cap V(\mathsf{B})=\emptyset$ (and thus $F_d(\mathsf{A})\cap F_d(\mathsf{B})=\emptyset$ for each $d$). In this case, the union $\mathsf{A}\cup \mathsf{B}$ is called the \textit{disjoint union} and is denoted by $\mathsf{A}\sqcup \mathsf{B}$. Furthermore, given (facet, or strict, respectively) simplicial maps $f:\mathsf{A}\to \mathsf{K}$ and $g:\mathsf{B}\to \mathsf{K}$, the map $f\sqcup g:V(\mathsf{A})\cup V(\mathsf{B})\to V(\mathsf{K})$, defined by \[(f\sqcup g)(v)=\begin{cases}
    f(v),&\hbox{ if $v\in V(\mathsf{A})$,}\\
    g(v),&\hbox{ if $v\in V(\mathsf{B})$}, 
\end{cases}\] is a (facet, or strict, respectively) simplicial map of $\mathsf{A}\sqcup \mathsf{B}$ to $\mathsf{K}$ (using the fact that $F_d(\mathsf{A})\cap F_d(\mathsf{B})=\emptyset$ for each $d$). 
\end{enumerate} 
\end{definition}
 
\begin{definition}[Underlying Graph]\label{defn:associated-graph}
 Given a simplicial complex $\mathsf{L}$, the \textit{underlying graph} of $\mathsf{L}$ is given by $\mathsf{L}^\ast$ where $V(\mathsf{L}^\ast)=V(\mathsf{L})$ and $E(\mathsf{L}^\ast)=F_1(\mathsf{L})$.    
\end{definition}

 Observe that $(\Gamma_n)^\ast=(\mathsf{K}_n)^\ast=K_n$ for any $n\geq 3$, it is the usual complete graph on $n$ vertices.

\begin{remark}\label{rem:recover-graph-hom}
 Note that if $\mathsf{L}$ and $\mathsf{K}$ are $1$-dimensional simplicial complexes, then a map $f:V(\mathsf{L})\to V(\mathsf{K})$ is a facet simplicial map if and only if it is a graph homomorphism from $\mathsf{L}^\ast$ to $\mathsf{K}^\ast$.   
\end{remark}

 A \textit{$k$-coloring} of a simplicial complex $\mathsf{L}$ is an assignment of $k$ colors to the vertices of $\mathsf{L}$, such that no non-unitary facet (i.e., maximal face) of $\mathsf{L}$ is monochromatic, i.e., no non-unitary facet has all its vertices colored in one color. Suppose that the integers  $1,2,\ldots,k$ are used as the \aspas{colors} in the $k$-colorings. Then, a $k$-coloring of $\mathsf{L}$ can be viewed as a surjective mapping $f:V(\mathsf{L})\to \{1,2,\ldots,k\}$; the requirement that no non-unitary facet of $\mathsf{L}$ is monochromatic means that $f(v_i)\neq f(v_j)$ for some $i\neq j$ whenever $v_1\cdots v_n$ is a non-unitary (i.e., $n\geq 2$) facet of $\mathsf{L}$. On the other hand, if there exists a mapping $f:V(\mathsf{L})\to \{1,2,\ldots,k\}$ such that $f(v_i)\neq f(v_j)$ for some $i\neq j$ whenever $v_1\cdots v_n$ is a non-unitary  facet of $\mathsf{L}$, then $\mathsf{L}$ admits a $n$-coloring with $n\leq k$. Note that $n=k$ whenever $f:V(\mathsf{L})\to \{1,2,\ldots,k\}$ is surjective. 

\medskip The \textit{chromatic number} of $\mathsf{L}$, denoted by $\chi(\mathsf{L})$, is defined as the smallest $k$ such that $\mathsf{L}$ admits a $k$-coloring. In other words, the chromatic number of $\mathsf{L}$ is the least number of colors needed to color the vertices of $\mathsf{L}$ in such a way that no non-unitary facet of $\mathsf{L}$ has all its vertices colored in one color \cite[p. 957]{golubev2017}. Note that if we remove or add isolated vertices from $\mathsf{L}$, its chromatic number does not change.

\begin{example}\label{exam:chromatic-numberex}
Let $\mathsf{L}$ and $\mathsf{K}$ be simplicial complexes defined as follows: $V(\mathsf{L})=\{a,b,c,d,e\}$, $F_1(\mathsf{L})=\{ab,bc,ac,cd,de,ce\}$, $F_2(\mathsf{L})=\{abc\}$, and  $V(\mathsf{K})=\{a',b',c',d'\}$, $F_1(\mathsf{K})=\{a'b',b'c',c'a',c'd'\}$, $F_2(\mathsf{K})=\{a'b'c'\}$. 
    $$
\begin{tikzpicture}
\Vertex[x=0,y=0,size=0.2,label=$a$,position=below,color=black]{A} 
\Vertex[x=1, y=2, size=0.2,label=$b$,position=above,color=black]{B}
\Vertex[x=2, y=0, size=0.2,label=$c$,position=below,color=black]{C} 
\Vertex[x=3, y=2, size=0.2,label=$d$,position=above,color=black]{D}
\Vertex[x=4, y=0, size=0.2,label=$e$,position=below,color=black]{E} 
 \fill[black!50, draw = black, opacity=0.3] (0,0) -- (1,2) -- (2,0) -- cycle;
\Edge[color=black](A)(B) 
\Edge[color=black](B)(C)
\Edge[color=black](C)(A)
\Edge[color=black](C)(D)
\Edge[color=black](C)(E)
\Edge[color=black](D)(E)
\Vertex[x=2,y=0,size=0.2,distance=0.5cm,label=$\mathsf{L}$,position=below,color=black]{M} 
\Vertex[x=6,y=0,size=0.2,label=$a'$,position=below,color=black]{F} 
\Vertex[x=7,y=2,size=0.2,label=$b'$,position=above,color=black]{G} 
\Vertex[x=8,y=0,size=0.2,label=$c'$,position=below,color=black]{H} 
\Vertex[x=9,y=2,size=0.2,label=$d'$,position=above,color=black]{I} 
 \fill[black!50, draw = black, opacity=0.3] (6,0) -- (7,2) -- (8,0) -- cycle;
 \Edge[color=black](F)(G) 
\Edge[color=black](G)(H)
\Edge[color=black](H)(F)
\Edge[color=black](H)(I)
\Vertex[x=8, y=0, size=0.2,distance=0.5cm,label=$\mathsf{K}$,position=below,color=black]{N}
  \end{tikzpicture}
 $$ We have that $\chi(\mathsf{L})=3$ and $\chi(\mathsf{K})=2$. For example, we have the colorings $f:V(\mathsf{L})\to\{1,2,3\}$ and $g:V(\mathsf{K})\to\{1,2\}$ given by \begin{align*}
     f(a)&=f(d)=1,\\
     f(b)&=f(c)=2,\\
     f(e)&=3,
 \end{align*} and \begin{align*}
     g(a')&=g(d')=1,\\
     g(b')&=g(c')=2.
 \end{align*}  
\end{example}

\begin{remark}\label{rem:correct}
 We observe that the inequality $\chi(\mathsf{L})\leq\lceil \chi(\mathsf{L}^\ast)/\dim(\mathsf{L})\rceil$ presented in \cite[Proposition 2.1, p. 957]{golubev2017} is not correct for any simplicial complex $\mathsf{L}$, where $\chi(L^\ast)$ denotes the usual chromatic number of the graph $\mathsf{L}^\ast$. For instance, consider the simplicial complexes given in Example~\ref{exam:chromatic-numberex}.    
\end{remark} 

\medskip Note that, for any face $F$ of a simplicial complex $\mathsf{L}$, the inequality \begin{equation}\label{eqn:dim-face-chr-1-sk}
    \dim(F)+1\leq \chi(\mathsf{L}^\ast)
\end{equation} always hold. 

\medskip Given a simplicial complex $\mathsf{L}$, let $G:=G_{\mathsf{L}}$ be the underlying graph of the collection $\mathcal{S}=\{F:~ \text{ $F$ is a facet of $\dim(F)=1$}\}$, i.e., $V(G)=\bigcup_{F\in\mathcal{S}} F$ and $E(G)=\mathcal{S}$. We present the following inequalities.

\begin{proposition}\label{prop:chrom-graph-chro}
  For any simplicial complex $\mathsf{L}$, we have \[\chi(G_{\mathsf{L}})\leq \chi(\mathsf{L})\leq\lceil \chi(\mathsf{L}^\ast)/d\rceil, \] whenever $d:=\min\{\dim(F):~ \text{ $F$ is a facet of $\mathsf{L}$}\}>0$. 
\end{proposition}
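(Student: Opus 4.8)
The plan is to establish the two inequalities separately, since they are of quite different character. For the lower bound $\chi(G_{\mathsf{L}})\leq\chi(\mathsf{L})$, I would start from an optimal coloring of $\mathsf{L}$, i.e. a map $c\colon V(\mathsf{L})\to\{1,\ldots,\chi(\mathsf{L})\}$ with no non-unitary facet monochromatic. The key observation is that every edge of $G_{\mathsf{L}}$ is, by construction, a $1$-dimensional facet of $\mathsf{L}$, hence a non-unitary facet; the non-monochromatic condition applied to such an edge $\{u,v\}$ says exactly that $c(u)\neq c(v)$. Therefore the restriction of $c$ to $V(G_{\mathsf{L}})\subseteq V(\mathsf{L})$ is a proper vertex coloring of the graph $G_{\mathsf{L}}$ using at most $\chi(\mathsf{L})$ colors, which yields $\chi(G_{\mathsf{L}})\leq\chi(\mathsf{L})$ immediately.

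For the upper bound $\chi(\mathsf{L})\leq\lceil\chi(\mathsf{L}^\ast)/d\rceil$, the plan is to build an explicit coloring of $\mathsf{L}$ by coarsening an optimal coloring of the underlying graph. Write $c:=\chi(\mathsf{L}^\ast)$ and fix a proper coloring $\phi\colon V(\mathsf{L})\to\{1,\ldots,c\}$ of $\mathsf{L}^\ast$. The crucial structural fact I would record first is that the vertices of any simplex $F\in\mathsf{L}$ all receive \emph{distinct} $\phi$-colors: indeed, for $u,v\in F$ the face $\{u,v\}$ lies in $F_1(\mathsf{L})=E(\mathsf{L}^\ast)$, so $\phi(u)\neq\phi(v)$. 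Next I would partition the palette $\{1,\ldots,c\}$ into $k:=\lceil c/d\rceil$ blocks, each of size at most $d$ (e.g. the $i$-th block being $\{(i-1)d+1,\ldots,\min(id,c)\}$, which is legitimate precisely because $kd\geq c$), and define $\psi(v)$ to be the index of the block containing $\phi(v)$. This $\psi$ uses at most $k$ colors.

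It then remains to verify that $\psi$ is a genuine coloring of $\mathsf{L}$ in the simplicial sense. Here is where the hypothesis $d>0$ is used: since $d=\min\{\dim(F):F\text{ a facet}\}>0$, every facet is non-unitary and satisfies $\dim(F)\geq d$, hence $|F|\geq d+1$. By the distinctness observation, the vertices of such an $F$ carry at least $d+1$ distinct $\phi$-colors; these cannot all lie in a single block of size at most $d$, so at least two vertices of $F$ fall in different blocks and thus receive different $\psi$-colors. Therefore no non-unitary facet of $\mathsf{L}$ is $\psi$-monochromatic, and by the coloring discussion preceding the statement this gives $\chi(\mathsf{L})\leq k=\lceil\chi(\mathsf{L}^\ast)/d\rceil$.

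I do not expect a serious obstacle: both directions reduce to a pigeonhole count. The point requiring the most care is the upper bound, namely checking that it is the \emph{minimum} facet dimension $d$ (rather than $\dim(\mathsf{L})$) that governs the block size, since the blocks of size $\leq d$ must defeat monochromaticity on \emph{every} facet simultaneously, including the lowest-dimensional ones; and confirming the elementary inequality $kd\geq c$ that makes the size-$d$ partition possible. Everything else is a direct application of the definitions and of the clique structure of simplices inside $\mathsf{L}^\ast$.
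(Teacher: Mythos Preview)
Your proposal is correct and follows essentially the same approach as the paper: for the lower bound you restrict an optimal coloring of $\mathsf{L}$ to $V(G_{\mathsf{L}})$ using that edges of $G_{\mathsf{L}}$ are $1$-dimensional facets, and for the upper bound you coarsen an optimal coloring of $\mathsf{L}^\ast$ by grouping the color classes into $\lceil \chi(\mathsf{L}^\ast)/d\rceil$ blocks of size at most $d$ and applying pigeonhole on each facet. The paper phrases the pigeonhole step as a contradiction (two vertices landing in the same preimage class $A_j$), but the argument is identical.
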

\begin{proof}
    Let  $n=\chi(L^\ast)$ and $m=\lceil \chi(L^\ast)/d\rceil$ (i.e., $m-1<n/d\leq m$). Observe that $m>1$ because $n>d$ (see inequality~(\ref{eqn:dim-face-chr-1-sk})). Let $g:\mathsf{L}^\ast\to K_n$ be a surjective graph homomorphism. Recall that $V(\mathsf{L})=V(\mathsf{L}^\ast)$. Set $V(\mathsf{L})=A_1\sqcup\cdots\sqcup A_n$, where each $A_j=g^{-1}(j)$. Here $\sqcup$ means the usual disjoint union of sets. Since $(m-1)d+1\leq n$, we can take the following subsets $B_1=A_1\sqcup\cdots \sqcup A_d$, $B_2=A_{d+1}\sqcup\cdots\sqcup A_{2d}$,$\ldots$, $B_m=A_{(m-1)d+1}\sqcup\cdots\sqcup A_{n}$. Of course, $V(\mathsf{L})=B_1\sqcup\cdots\sqcup B_m$. Note that each $B_1,\ldots,B_{m-1}$ is the disjoint union of $d$ subsets $A_j$, and $B_m$ is the disjoint union of $\ell$ subsets $A_j$, where  $\ell=n-(m-1)d-1+1=n-md+d\leq d$. We consider the map $f:V(\mathsf{L})\to \{1,\ldots,m\}$ by \[f(v)=i \text{ whenever $v\in B_i$}.\] If $v_1\cdots v_k$ is a non-unitary (i.e., $k\geq 2$) facet of $\mathsf{L}$, we have $k\geq d+1$. Then, we can conclude that $f(v_r)\neq f(v_s)$ for some $r\neq s$. Otherwise, $v_1,\ldots, v_k\in B_i$ for some $i\in\{1,\ldots,m\}$, then there exists $p,q\in \{1,\ldots,k\}$ with $p\neq q$ such that $v_p,v_q\in A_j$ for some $j\in\{1,\ldots,n\}$, i.e., $g(v_p)=g(v_q)=j$, which is a contradiction, because $v_pv_q\in E(\mathsf{L}^\ast)$ and $g$ is a graph homomorphism.  Therefore, $\chi(\mathsf{L})\leq m=\lceil \chi(\mathsf{L}^\ast)/d\rceil$.

    Now, we will check the inequality $\chi(G_{\mathsf{L}})\leq \chi(\mathsf{L})$. Let $m=\chi(\mathsf{L})$, and consider a surjective mapping $f:V(\mathsf{L})\to \{1,\ldots,m\}$ such that $f(v_i)\neq f(v_j)$ for some $i\neq j$ whenever $v_1\cdots v_n$ is a non-unitary (i.e., $n\geq 2$) facet of $\mathsf{L}$. Since $V(G_{\mathsf{L}})\subseteq V(\mathsf{L})$, we consider the restriction map $f_|:V(G_{\mathsf{L}})\to \{1,\ldots,m\}$. If $uv\in E(G_{\mathsf{L}})$, i.e., $uv$ is a non-unitary facet of $\mathsf{L}$, then $f(u)\neq f(v)$, i.e., $f(u)f(v)\in K_m$. It yields that $f_|$ is a graph homomorphism of $G_{\mathsf{L}}$ to $K_m$. Therefore, $\chi(G_{\mathsf{L}})\leq m=\chi(\mathsf{L})$.  
\end{proof}

\medskip A simplicial complex $\mathsf{L}$ is said to be \textit{pure} if $\dim \mathsf{L}<\infty$ and every facet is of dimension $\dim \mathsf{L}$.

\medskip As a direct consequence of Proposition~\ref{prop:chrom-graph-chro}, we have the following result.

\begin{corollary}\label{cor:gl-l-lstar}
Let $\mathsf{L}$ be a simplicial complex.
\begin{enumerate}
    \item[(1)] If $\mathsf{L}$ is pure with dimension at least 1, then \[\chi(G_{\mathsf{L}})\leq\chi(\mathsf{L})\leq\lceil \chi(\mathsf{L}^\ast)/\dim(\mathsf{L})\rceil.\]
    \item[(2)] If $\mathsf{L}$ does not have isolated vertices and $E(G_{\mathsf{L}})\neq\emptyset$, then \begin{equation}\label{eqn:chi-chi-chi}
     \chi(G_{\mathsf{L}})\leq \chi(\mathsf{L})\leq  \chi(\mathsf{L}^\ast).
 \end{equation} 
\end{enumerate}
\end{corollary}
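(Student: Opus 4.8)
The plan is to derive both parts of Corollary~\ref{cor:gl-l-lstar} directly from Proposition~\ref{prop:chrom-graph-chro} by controlling the quantity $d:=\min\{\dim(F):~F \text{ is a facet of }\mathsf{L}\}$. The whole point is that the Proposition already contains the substantive combinatorial argument, so the corollary should be a matter of specializing $d$ correctly and checking the hypothesis $d>0$ holds in each case.

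For part~(1), suppose $\mathsf{L}$ is pure of dimension $\dim(\mathsf{L})\geq 1$. By definition of purity, every facet has dimension exactly $\dim(\mathsf{L})$, so the minimum $d$ over all facets is precisely $d=\dim(\mathsf{L})$, and moreover $d=\dim(\mathsf{L})\geq 1>0$, which is exactly the hypothesis needed to invoke Proposition~\ref{prop:chrom-graph-chro}. Substituting $d=\dim(\mathsf{L})$ into the conclusion $\chi(G_{\mathsf{L}})\leq \chi(\mathsf{L})\leq\lceil \chi(\mathsf{L}^\ast)/d\rceil$ of the Proposition yields $\chi(G_{\mathsf{L}})\leq\chi(\mathsf{L})\leq\lceil \chi(\mathsf{L}^\ast)/\dim(\mathsf{L})\rceil$ immediately.

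For part~(2), I would first observe that if $\mathsf{L}$ has no isolated vertices, then every vertex lies in some simplex of dimension at least $1$, so every facet is non-unitary and hence $d\geq 1>0$; this again licenses the use of Proposition~\ref{prop:chrom-graph-chro}. The lower bound $\chi(G_{\mathsf{L}})\leq\chi(\mathsf{L})$ is just the left-hand inequality of the Proposition, and it requires $E(G_{\mathsf{L}})\neq\emptyset$ only to ensure $G_{\mathsf{L}}$ is a graph for which $\chi(G_{\mathsf{L}})$ is meaningful. For the upper bound, since $d\geq 1$ we have $\lceil \chi(\mathsf{L}^\ast)/d\rceil\leq\lceil \chi(\mathsf{L}^\ast)/1\rceil=\chi(\mathsf{L}^\ast)$ (using that $\lceil x/d\rceil$ is non-increasing in $d$ for positive integers and that $\chi(\mathsf{L}^\ast)$ is already an integer), so chaining this with the Proposition's upper bound $\chi(\mathsf{L})\leq\lceil \chi(\mathsf{L}^\ast)/d\rceil$ gives $\chi(\mathsf{L})\leq\chi(\mathsf{L}^\ast)$, establishing~(\ref{eqn:chi-chi-chi}).

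The only subtlety, and the step I would double-check most carefully, is the role of the hypothesis $E(G_{\mathsf{L}})\neq\emptyset$ in part~(2) and why it is not needed in part~(1): it guards against the degenerate case where $\mathsf{L}$ has no $1$-dimensional facets at all, in which case $G_{\mathsf{L}}$ has no edges and the leftmost inequality becomes vacuous or ill-posed. In part~(1), purity forces a cleaner structure so $d=\dim(\mathsf{L})$ is automatic, whereas in part~(2) one must argue that the absence of isolated vertices guarantees $d\geq 1$ even though facets of different dimensions may coexist. I expect no genuine obstacle here beyond verifying that these hypotheses are exactly what is required to meet the Proposition's standing assumption $d>0$.
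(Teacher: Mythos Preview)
Your proposal is correct and follows exactly the approach the paper intends: the paper simply states that the corollary is a direct consequence of Proposition~\ref{prop:chrom-graph-chro}, and your argument spells out precisely how, by identifying $d=\dim(\mathsf{L})$ in the pure case and $d\geq 1$ in the no-isolated-vertices case. Your added discussion of the role of the hypothesis $E(G_{\mathsf{L}})\neq\emptyset$ goes slightly beyond what the paper provides, but is accurate and helpful.
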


\medskip Example~\ref{exam:chromatic-numberex} shows that the inequalities in (\ref{eqn:chi-chi-chi}) can be equalities. Moreover, if $\mathsf{L}$ is a $1$-dimensional simplicical complex without isolated vertices, we have $G_{\mathsf{L}}=\mathsf{L}^\ast$ and thus $\chi(\mathsf{L})=\chi(\mathsf{L}^\ast)=\chi(G_{\mathsf{L}})$. 

\medskip We have the following statement, which is fundamental in Theorem~\ref{thm:lower-bound}.

\begin{proposition}\label{prop:chromatic-union}
\noindent\begin{enumerate}
    \item[(1)] If $\mathsf{L}\stackrel{\mathrm{facet}}{\to} \mathsf{K}$, then \[\chi(\mathsf{L})\leq\chi(\mathsf{K}).\]   
    \item[(2)] Let $\mathsf{L}$ be a simplicial complex, and  let $\mathsf{L}_1,\ldots,\mathsf{L}_m$ be subcomplexes of $\mathsf{L}$ such that $\mathsf{L}=\mathsf{L}_1\cup\cdots\cup \mathsf{L}_m$. Then, we have \[\chi(\mathsf{L})\leq \prod_{j=1}^m\chi(\mathsf{L}_j).\]  
\end{enumerate}  
\end{proposition}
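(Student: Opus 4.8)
The plan is to handle the two parts separately, constructing the required colorings by pullback in part (1) and by a product construction in part (2), in both cases merely unwinding the definition of a coloring of a simplicial complex.

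For part (1), suppose $f:\mathsf{L}\stackrel{\mathrm{facet}}{\to}\mathsf{K}$ and fix an optimal coloring $c:V(\mathsf{K})\to\{1,\ldots,\chi(\mathsf{K})\}$. I would consider the composite $c\circ f:V(\mathsf{L})\to\{1,\ldots,\chi(\mathsf{K})\}$ and verify that it is a valid coloring of $\mathsf{L}$, i.e.\ that no non-unitary facet of $\mathsf{L}$ becomes monochromatic. Given such a facet $F$, the definition of a facet simplicial map guarantees that $f(F)$ is a non-unitary facet of $\mathsf{K}$; since $c$ is a coloring there exist $u',v'\in f(F)$ with $c(u')\neq c(v')$, and choosing preimages $u,v\in F$ (necessarily distinct, as their images differ) gives $c(f(u))\neq c(f(v))$. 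Hence $c\circ f$ is non-monochromatic on $F$, so $\mathsf{L}$ is colored with at most $\chi(\mathsf{K})$ colors, yielding $\chi(\mathsf{L})\leq\chi(\mathsf{K})$.

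For part (2), I would take optimal colorings $c_j:V(\mathsf{L}_j)\to\{1,\ldots,\chi(\mathsf{L}_j)\}$ and combine them into a single product coloring. First extend each $c_j$ to all of $V(\mathsf{L})$ by assigning one fixed color to the vertices outside $V(\mathsf{L}_j)$, obtaining $\tilde c_j:V(\mathsf{L})\to\{1,\ldots,\chi(\mathsf{L}_j)\}$, and then set $C(v)=(\tilde c_1(v),\ldots,\tilde c_m(v))$, a map into a color set of size $\prod_{j=1}^m\chi(\mathsf{L}_j)$. To see that $C$ is valid I would use the structure of the union: if $F$ is a non-unitary facet of $\mathsf{L}$, say $\dim(F)=d\geq 1$, then $F\in F_d(\mathsf{L})=\bigcup_j F_d(\mathsf{L}_j)$ by Definition~\ref{defn:union-graphs}, so $F\in\mathsf{L}_j$ for some $j$. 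I would then argue that this $F$ is in fact a facet of $\mathsf{L}_j$: any simplex of $\mathsf{L}_j$ strictly containing $F$ would also lie in $\mathsf{L}\supseteq\mathsf{L}_j$, contradicting maximality of $F$ in $\mathsf{L}$. Consequently $c_j$ is non-monochromatic on $F$, providing $u,v\in F$ with $c_j(u)\neq c_j(v)$; since $u,v\in V(\mathsf{L}_j)$ we get $\tilde c_j(u)\neq\tilde c_j(v)$, so $C(u)\neq C(v)$ and $F$ is not monochromatic under $C$. This exhibits a coloring of $\mathsf{L}$ with at most $\prod_{j=1}^m\chi(\mathsf{L}_j)$ colors, hence $\chi(\mathsf{L})\leq\prod_{j=1}^m\chi(\mathsf{L}_j)$.

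The main obstacle — really the only nontrivial point — is the passage from \emph{facet of $\mathsf{L}$} to \emph{facet of $\mathsf{L}_j$} in part (2): one must check that maximality is preserved upon restricting to the subcomplex, and that the relevant facet genuinely belongs to some piece of the cover, which is precisely where the union being taken dimensionwise on $F_d$ for $d\geq 1$ is used. The bookkeeping of extending each $c_j$ off $V(\mathsf{L}_j)$ is harmless since distinctness is only needed on vertices of the facet, which always lie in the chosen $\mathsf{L}_j$.
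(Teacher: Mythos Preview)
Your proposal is correct and follows essentially the same approach as the paper: pullback of an optimal coloring along the facet map for part~(1), and a product coloring for part~(2). The only cosmetic difference is that the paper first replaces each $\mathsf{L}_j$ by a spanning subcomplex (using that isolated vertices do not affect $\chi$) rather than extending the $c_j$'s; your explicit verification that a non-unitary facet of $\mathsf{L}$ is a facet of some $\mathsf{L}_j$ is actually more careful than the paper's ``of course'' at that step.
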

\begin{proof}
\noindent\begin{enumerate}
    \item[(1)] Let $f:\mathsf{L}\stackrel{\mathrm{facet}}{\to} \mathsf{K}$ be a facet simplicial map. Let $m=\chi(\mathsf{K})$. Consider a surjective mapping $g:V(\mathsf{K})\to \{1,2,\ldots,m\}$ such that $g(v_i)\neq g(v_j)$ for some $i\neq j$ whenever $v_1\cdots v_n$ is a non-unitary facet of $\mathsf{K}$. Then, consider the composite mapping $g\circ f:V(\mathsf{L})\to \{1,2,\ldots,m\}$. If $u_1\cdots u_\ell$ is a non-unitary facet of $\mathsf{L}$, then $f(u_1)\cdots f(u_\ell)$ is a non-unitary facet of $\mathsf{K}$ (here we use that $f$ is a facet simplicial map). Set $f(u_1)\cdots f(u_\ell)=v_1\cdots v_k$ with $2\leq k\leq \ell$. Then there exists $i\neq j$ such that $g(v_i)\neq g(v_j)$. Let $u_r$ and $u_s$ such that $f(u_r)=v_i$ and $f(u_s)=v_j$ (of course $r\neq s$). Moreover, $(g\circ f)(u_r)\neq (g\circ f)(u_s)$. Hence, there exists a mapping $g\circ f:V(\mathsf{L})\to \{1,2,\ldots,m\}$ such that $(g\circ f)(u_r)\neq (g\circ f)(u_s)$ for some $r\neq s$ whenever $u_1\cdots u_\ell$ is a non-unitary facet of $\mathsf{L}$. Therefore, $\mathsf{L}$ admits a $n$-coloring with $n\leq m$, and it implies that $\chi(\mathsf{L})\leq n\leq \chi(\mathsf{K})$.  
     \item[(2)] Since the chromatic number does not change when we add isolated vertices, we can assume that $V(\mathsf{L}_i)=V(\mathsf{L})$ for each $i$ (i.e., each $\mathsf{L}_i$ is a spanning subcomplex of $\mathsf{L}$). Suppose that $\ell_i=\chi(\mathsf{L}_i)$, and consider for each $i$ a surjective mapping $f_i:V(\mathsf{L}_i)\to \{1,2,\ldots,\ell_i\}$ such that $f_i(v_r)\neq f_i(v_s)$ for some $r\neq s$ whenever $v_1\cdots v_n$ is a non-unitary facet of $\mathsf{L}_i$. Define the map $f:V(\mathsf{L})\to \{1,\ldots,\ell_1\}\times\cdots\times \{1,\ldots,\ell_m\}$ by \[f(v)=\left(f_1(v),\ldots,f_m(v)\right) \quad \text{ for all $v\in V(\mathsf{L})$.}\]  Since $f(v_r)\neq f(v_s)$ for some $r\neq s$ (indeed, $f_i(v_r)\neq f_i(v_s)$ for some $i$) whenever $v_1\cdots v_n$ is a non-unitary facet of $\mathsf{L}$ (and of course $v_1\cdots v_n$ is a non-unitary facet of $\mathsf{L}_i$ for some $i$), $\mathsf{L}$ admits a $n$-coloring with $n\leq \prod_{j=1}^{m}\ell_j$, and it implies that $\chi(\mathsf{L})\leq n\leq \prod_{j=1}^{m}\ell_j$. Therefore, $\chi(\mathsf{L})\leq \prod_{j=1}^m\chi(\mathsf{L}_j)$.    
     \end{enumerate} 
\end{proof}

\begin{remark}\label{rem:strict-chroma}
    We can define the following concept. The \textit{strict chromatic number} of $\mathsf{L}$, denoted by $\chi_s(\mathsf{L})$, is defined as the smallest $k$ such that there exists a strict simplicial map $\mathsf{L}\stackrel{\text{s}}{\to}\Gamma_k$. Note that if we remove or add isolated vertices from $\mathsf{L}$, its strict chromatic number does not change. Furthermore, if $\mathsf{L}\stackrel{\text{s}}{\to} \mathsf{K}$, then $\chi_s(\mathsf{L})\leq\chi_s(\mathsf{K})$. However, we observe that $\chi_s(\mathsf{L})=\chi(\mathsf{L}^\ast)$ because any map $f:V(\mathsf{L})\to\{1,\ldots,k\}$ is a strict simplicial map from $\mathsf{L}$ to $\Gamma_k$ if and only if it is a graph homomorphism from $\mathsf{L}^\ast$ to $K_k$. Hence, given a simplicial complex $\mathsf{L}$, and subcomplexes $\mathsf{L}_1,\ldots,\mathsf{L}_m$ of $\mathsf{L}$ such that $\mathsf{L}=\mathsf{L}_1\cup\cdots\cup \mathsf{L}_m$. Then \[\chi_s(\mathsf{L})\leq \prod_{j=1}^m\chi_s(\mathsf{L}_j).\]  
\end{remark}

\section{(Injective) facet-complexity}\label{sec:one}
In this section, we introduce the notion of (injective) facet-complexity and its properties. Several examples are provided to support this theory.

\medskip The notion of (injective) hom-complexity between graphs was recently introduced in \cite{zapata2024}. In this work, we extend this notion to higher dimensions (i.e., simplicial complexes). 

\subsection{Definitions and Examples} 
Given two simplicial complexes $\mathsf{L}$ and $\mathsf{K}$, in general, a facet or strict simplicial map $f:\mathsf{L}\to \mathsf{K}$ may not exist. In contrast, any constant map between simplicial complexes is a simplicial map. A significant challenge in simplicial complex theory is identifying  facet or strict simplicial maps. Therefore, we present the main definition of this work.

\begin{definition}[(Injective) Facet-complexity]\label{defn:complexity}
Let $\mathsf{L}$ and $\mathsf{K}$ be simplicial complexes.
    \begin{enumerate}
        \item[(1)]  The \textit{facet-complexity from $\mathsf{L}$ to $\mathsf{K}$}, denoted by $\text{C}(\mathsf{L};\mathsf{K})$, is the least positive integer $k$ such that there exist subcomplexes $\mathsf{L}_1,\ldots,\mathsf{L}_k$ of $\mathsf{L}$ satisfying  $\mathsf{L}=\mathsf{L}_1\cup\cdots\cup \mathsf{L}_k$, with the property that for each $\mathsf{L}_i$, there exists a facet simplicial map   $f_i:\mathsf{L}_i\stackrel{\mathrm{facet}}{\to} \mathsf{K}$. We set $\text{C}(\mathsf{L};\mathsf{K})=\infty$ if no such integer $k$ exists. 
        \item[(2)]  The \textit{injective facet-complexity from $\mathsf{L}$ to $\mathsf{K}$}, denoted by $\text{IC}(\mathsf{L};\mathsf{K})$, is the least positive integer $k$ such that there exist subcomplexes $\mathsf{L}_1,\ldots,\mathsf{L}_k$ of $\mathsf{L}$ satisfying $\mathsf{L}=\mathsf{L}_1\cup\cdots\cup \mathsf{L}_k$, and for each $\mathsf{L}_i$, there exists an injective facet simplicial map   $f_i:\mathsf{L}_i\stackrel{\mathrm{facet}}{\to} \mathsf{K}$. We set $\text{IC}(\mathsf{L};\mathsf{K})=\infty$ if no such integer $k$ exists.
    \end{enumerate} 
\end{definition}

\medskip Likewise, we define the \textit{strict-complexity from $\mathsf{L}$ to $\mathsf{K}$}, denoted by $\text{C}_s(\mathsf{L};\mathsf{K})$, and the \textit{injective strict-complexity from $\mathsf{L}$ to $\mathsf{K}$}, denoted by $\text{IC}_s(\mathsf{L};\mathsf{K})$. 

\medskip A collection $\mathcal{M}=\{f_i:\mathsf{L}_i\to \mathsf{K}\}_{i=1}^\ell$, where $\mathsf{L}_1,\ldots,\mathsf{L}_\ell$ are subcomplexes of $\mathsf{L}$ such that $\mathsf{L}=\mathsf{L}_1\cup\cdots\cup \mathsf{L}_\ell$ and  each $f_i:\mathsf{L}_i\to \mathsf{K}$ is a facet simplicial map, is called a \textit{quasi-facet simplicial map} from $\mathsf{L}$ to $\mathsf{K}$. A quasi-facet simplicial map $\mathcal{M}=\{f_i:\mathsf{L}_i\to \mathsf{K}\}_{i=1}^\ell$ is termed \textit{optimal} if $\ell=\text{C}(\mathsf{L};\mathsf{K})$. Observe that a unitary quasi-facet simplicial map $\{f:\mathsf{L}\to \mathsf{K}\}$ is optimal and constitutes a facet simplicial map from $\mathsf{L}$ to $\mathsf{K}$. Additionally, any quasi-facet  simplicial map $\mathcal{M}=\{f_i:\mathsf{L}_i\to \mathsf{K}\}_{i=1}^\ell$ induces a map $f:V(\mathsf{L})\to V(\mathsf{K})$ defined by $f(v)=f_i(v)$, where $i$ is the least index such that $v\in V(\mathsf{L}_i)$. Likewise, a collection $\mathcal{M}=\{f_i:\mathsf{L}_i\to \mathsf{K}\}_{i=1}^\ell$, where $\mathsf{L}_1,\ldots,\mathsf{L}_\ell$ are subcomplexes of $\mathsf{L}$ such that $\mathsf{L}=\mathsf{L}_1\cup\cdots\cup \mathsf{L}_k$ and each $f_i:\mathsf{L}_i\to \mathsf{K}$ is an injective facet simplicial map, is called an \textit{injective quasi-facet simplicial map} from $\mathsf{L}$ to $\mathsf{K}$. An injective quasi-facet simplicial map $\mathcal{M}=\{f_i:\mathsf{L}_i\to \mathsf{K}\}_{i=1}^\ell$ is termed \textit{optimal} if $\ell=\text{IC}(\mathsf{L};\mathsf{K})$.

\medskip Note that if we remove or add isolated vertices from $\mathsf{L}$, its facet-complexity $\text{C}(\mathsf{L};\mathsf{K})$ does not change. This statement does not hold for the injective facet-complexity. For example, $\mathrm{IC}(\mathsf{K}_3\sqcup \{\ast\};\mathsf{K}_3)=2$, whereas $\mathrm{IC}(\mathsf{K}_3;\mathsf{K}_3)=1$.

\medskip We say that a subcomplex $\mathsf{K}$ of $\mathsf{L}$ is a \textit{facet subcomplex} if any facet of $\mathsf{K}$ is a facet of $\mathsf{L}$. Hence, the inclusion $\mathsf{K}\hookrightarrow \mathsf{L}$ is an injective facet simplicial map.

\medskip By Definition~\ref{defn:complexity}, we can also make the following remark. 

\begin{remark}\label{rem:def-ob}
\noindent \begin{itemize}
    \item[(1)]  $\text{C}(\mathsf{L};\mathsf{K})\leq \text{IC}(\mathsf{L};\mathsf{K})$ for any simplicial complexes $\mathsf{L}$ and $\mathsf{K}$, since any injective quasi-facet simplicial map is a quasi-facet simplicial map. 
    \item[(2)]  $\text{C}(\mathsf{L};\mathsf{K})=1$ if and only if there exists a facet simplicial map $\mathsf{L}\to \mathsf{K}$ (i.e., $\mathsf{L}$ is facet $\mathsf{K}$-colourable). Additionally, $\text{IC}(\mathsf{L};\mathsf{K})=1$  if and only if there exists an injective facet simplicial map  $\mathsf{L}\to \mathsf{K}$, which is equivalent to saying that $\mathsf{K}$ admits a copy of $\mathsf{L}$ as a facet subcomplex.
    \item[(3)] The facet-complexity $\text{C}(\mathsf{L};\mathsf{K})$ coincides with the least positive integer $k$ such that there exist subcomplexes $\mathsf{L}_1,\ldots,\mathsf{L}_k$ of $\mathsf{L}$ satisfying  $\mathsf{L}=\mathsf{L}_1\cup\cdots\cup \mathsf{L}_k$, and each $\mathsf{L}_i$ is facet $\mathsf{K}$-colourable. 
    \item[(4)] Since facet $\mathsf{K}$-colorability does not depend on isolated vertices, we have that $\text{C}(\mathsf{L};\mathsf{K})$ coincides with the least positive integer $k$ such that there exist spanning subcomplexes $\mathsf{L}_1,\ldots,\mathsf{L}_k$ of $\mathsf{L}$ satisfying $\mathsf{L}=\mathsf{L}_1\cup\cdots\cup \mathsf{L}_k$, and each $\mathsf{L}_i$ is facet $\mathsf{K}$-colourable. 
    \item[(5)] If $\text{C}_s(\mathsf{L};\mathsf{K})<\infty$, then $\mathsf{K}$ admits a simplex of dimension $d$ whenever $\mathsf{L}$ admits a simplex of dimension $d$. In particular, \[\dim(\mathsf{L})\leq \dim(\mathsf{K}).\] Similarly, if $\text{C}(\mathsf{L};\mathsf{K})<\infty$, then $\mathsf{K}$ admits a non-unitary facet of dimension $d'\leq d$ whenever $\mathsf{L}$ admits a non-unitary facet of dimension $d$. In particular, \[\dim(\mathsf{L})\leq \dim(\mathsf{K}).\]
    \item[(6)] Note that any injective facet simplicial map is an injective simplicial map (and of course it is an injective strict simplicial map). Hence, we have \[\text{IC}(\mathsf{L};\mathsf{K})\geq\text{IC}_s(\mathsf{L};\mathsf{K}). \] 
\end{itemize}
\end{remark}

Given a simplicial complex $\mathsf{L}$, recall that $\mathsf{L}^\ast$ denotes the underlying graph of $\mathsf{L}$ (see Definition~\ref{defn:associated-graph}). More generally, for each $q\geq 0$, the \textit{$q$-skeleton} of $\mathsf{L}$ is given by \[\mathsf{L}^{(q)}=\bigcup_{d=0}^{q}F_d(\mathsf{L}).\] Note that each $q$-skeleton $\mathsf{L}^{(q)}$ is a subcomplex of $\mathsf{L}$. For instance, the $1$-skeleton $\mathsf{L}^{(1)}$ corresponds to $\mathsf{L}^\ast$. 

\medskip The symbol ($\text{IC}(\mathsf{L}^{\ast};\mathsf{K}^{\ast})$, respectively) $\text{C}(\mathsf{L}^{\ast};\mathsf{K}^{\ast})$ denotes the (injective, respectively) hom-complexity of $\mathsf{L}^{\ast}$ to $\mathsf{K}^{\ast}$ introduced in \cite{zapata2024}. That is, ($\text{IC}(\mathsf{L}^{\ast};\mathsf{K}^{\ast})$, respectively) $\text{C}(\mathsf{L}^{\ast};\mathsf{K}^{\ast})$ is the least positive integer $k$ such that there exist subgraphs $G_1,\ldots,G_k$ of $\mathsf{L}^{\ast}$ satisfying  $\mathsf{L}^{\ast}=G_1\cup\cdots\cup G_k$, with the property that for each $G_i$, there exists a (injective, respectively) graph homomorphism $f_i:G_i\to \mathsf{K}^{\ast}$.

\medskip The following remark says that the (injective) facet-complexity between the $1$-skeletons recovers the complexity between graphs.

\begin{remark}
 Let $\mathsf{L}$ and $\mathsf{K}$ be simplicial complexes.
 \begin{enumerate}
     \item[(1)] The following equalities hold: \[\text{C}_s(\mathsf{L}^{(1)};\mathsf{K}^{(1)})=\text{C}(\mathsf{L}^{\ast};\mathsf{K}^{\ast}) \quad\text{ and }\quad \text{IC}_s(\mathsf{L}^{(1)};\mathsf{K}^{(1)})=\text{IC}(\mathsf{L}^{\ast};\mathsf{K}^{\ast}).\]
     \item[(2)] Suppose that $\mathsf{K}$ has not isolated vertices (and of course $\mathsf{K}^{\ast}$ has not isolated vertices). Then \[\text{C}(\mathsf{L}^{(1)};\mathsf{K}^{(1)})=\text{C}(\mathsf{L}^{\ast};\mathsf{K}^{\ast}).\]
     \item[(3)] The equality \[\text{IC}(\mathsf{L}^{(1)};\mathsf{K}^{(1)})=\text{IC}(\mathsf{L}^{\ast};\mathsf{K}^{\ast})\] always holds.
 \end{enumerate}
\end{remark}

\medskip From Definition~\ref{defn:complexity} we have the following remark.

\begin{remark}\label{rem:ineq-complex-q}
Let $\mathsf{L}$ and $\mathsf{K}$ be simplicial complexes. \begin{enumerate}
    \item[(1)] We have \[\text{C}_s(\mathsf{L};\mathsf{K})\geq\cdots\geq \text{C}_s(\mathsf{L}^{(2)};\mathsf{K}^{(2)})\geq\text{C}(\mathsf{L}^{\ast};\mathsf{K}^{\ast})\] and \[\text{IC}_s(\mathsf{L};\mathsf{K})\geq\cdots\geq \text{IC}_s(\mathsf{L}^{(2)};\mathsf{K}^{(2)})\geq\text{IC}(\mathsf{L}^{\ast};\mathsf{K}^{\ast}).\]
    \item[(2)] Since any (injective, respectively) graph homomorphism $f:\mathsf{L}^{\ast}\to K_n$ is a (injective, respectively) strict simplicial map $f:\mathsf{L}\to\Gamma_n$ (recall that $(\Gamma_n)^\ast=K_n$), we have \[\text{C}_s(\mathsf{L};\Gamma_n)=\cdots= \text{C}_s(\mathsf{L}^{(2)};(\Gamma_n)^{(2)})=\text{C}(\mathsf{L}^{\ast};K_n) \] and \[\text{IC}_s(\mathsf{L};\Gamma_n)=\cdots= \text{IC}_s(\mathsf{L}^{(2)};(\Gamma_n)^{(2)})=\text{IC}(\mathsf{L}^{\ast};K_n).\]
    \item[(3)] Since any (injective, respectively) graph homomorphism $f:\mathsf{L}^{\ast}\to K_n$ is a (injective, respectively) strict simplicial map $f:\mathsf{L}\to\mathsf{K}_n$ whenever $\dim(\mathsf{L})\leq n-2$ (note that $(\mathsf{K}_n)^\ast=K_n$), we have \[\text{C}_s(\mathsf{L};\mathsf{K}_n)=\cdots= \text{C}_s(\mathsf{L}^{(2)};(\mathsf{K}_n)^{(2)})=\text{C}(\mathsf{L}^{\ast};K_n)\] and \[\text{IC}_s(\mathsf{L};\mathsf{K}_n)=\cdots= \text{IC}_s(\mathsf{L}^{(2)};(\mathsf{K}_n)^{(2)})=\text{IC}(\mathsf{L}^{\ast};K_n)\] whenever $\dim(\mathsf{L})\leq n-2$.
\end{enumerate}    
\end{remark}

\medskip The following example demonstrates that the facet-complexity can be strictly more than the facet-complexity between the $1$-skeletons.

\begin{example}\label{exam:complexity-less-injective}
    Let $\mathsf{L}$ and $\mathsf{K}$ be simplicial complexes as given in Example~\ref{exam:chromatic-numberex}, that is, they are defined as follows: $V(\mathsf{L})=\{a,b,c,d,e\}$, $F_1(\mathsf{L})=\{ab,bc,ac,cd,de,ce\}$, $F_2(\mathsf{L})=\{abc\}$, and  $V(\mathsf{K})=\{a',b',c',d'\}$, $F_1(\mathsf{K})=\{a'b',b'c',c'a',c'd'\}$, $F_2(\mathsf{K})=\{a'b'c'\}$. 
    $$
\begin{tikzpicture}
\Vertex[x=0,y=0,size=0.2,label=$a$,position=below,color=black]{A} 
\Vertex[x=1, y=2, size=0.2,label=$b$,position=above,color=black]{B}
\Vertex[x=2, y=0, size=0.2,label=$c$,position=below,color=black]{C} 
\Vertex[x=3, y=2, size=0.2,label=$d$,position=above,color=black]{D}
\Vertex[x=4, y=0, size=0.2,label=$e$,position=below,color=black]{E} 
 \fill[black!50, draw = black, opacity=0.3] (0,0) -- (1,2) -- (2,0) -- cycle;
\Edge[color=black](A)(B) 
\Edge[color=black](B)(C)
\Edge[color=black](C)(A)
\Edge[color=black](C)(D)
\Edge[color=black](C)(E)
\Edge[color=black](D)(E)
\Vertex[x=2,y=0,size=0.2,distance=0.5cm,label=$\mathsf{L}$,position=below,color=black]{M} 
\Vertex[x=6,y=0,size=0.2,label=$a'$,position=below,color=black]{F} 
\Vertex[x=7,y=2,size=0.2,label=$b'$,position=above,color=black]{G} 
\Vertex[x=8,y=0,size=0.2,label=$c'$,position=below,color=black]{H} 
\Vertex[x=9,y=2,size=0.2,label=$d'$,position=above,color=black]{I} 
 \fill[black!50, draw = black, opacity=0.3] (6,0) -- (7,2) -- (8,0) -- cycle;
 \Edge[color=black](F)(G) 
\Edge[color=black](G)(H)
\Edge[color=black](H)(F)
\Edge[color=black](H)(I)
\Vertex[x=8, y=0, size=0.2,distance=0.5cm,label=$\mathsf{K}$,position=below,color=black]{N}
  \end{tikzpicture}
 $$ Note that the map $f:V(\mathsf{L})\to V(\mathsf{K})$ defined by $f(a)=f(e)=a'$, $f(b)=f(d)=b'$, and $f(c)=c'$ is a graph homomorphism from $\mathsf{L}^\ast$ to $\mathsf{K}^\ast$. Hence, we have $\mathrm{C}(\mathsf{L}^\ast;\mathsf{K}^\ast)=1$.
 
On the other hand, since $\chi(\mathsf{L})=3$ and $\chi(\mathsf{K})=2$, we have that there is no facet simplicial map of $\mathsf{L}$ to $\mathsf{K}$ (by Proposition~\ref{prop:chromatic-union}). Hence, $\mathrm{C}(\mathsf{L};\mathsf{K})\geq 2$. Additionally, consider the subcomplexes $\mathsf{L}_1$ and $\mathsf{L}_2$ of $L$, defined as follows:\begin{align*}
     V(\mathsf{L}_1)&=\{a,b,c,d,e\},\\
     F_1(\mathsf{L}_1)&=\{ab,bc,ac,cd,de\},\\
     F_2(\mathsf{L}_1)&=\{abc\},\\
     V(\mathsf{L}_2)&=\{c,e\},\\
     F_1(\mathsf{L}_2)&=\{ce\}.
 \end{align*} Together with the facet simplicial maps $f_1:\mathsf{L}_1\stackrel{\text{facet}}{\to} \mathsf{K}$ and $f_2:\mathsf{L}_2\stackrel{\text{facet}}{\to} \mathsf{K}$, defined by: \begin{align*}
     f_1(a)&=a',\\
     f_1(b)&=b',\\
     f_1(c)&=f_1(e)=c',\\
     f_1(d)&=d',\\
     f_2(c)&=c',\\
     f_2(e)&=d'.\\
 \end{align*} Note that $\mathsf{L}=\mathsf{L}_1\cup \mathsf{L}_2$. Thus, we have $\mathrm{C}(\mathsf{L};\mathsf{K})\leq 2$. Therefore, we conclude that $\mathrm{C}(\mathsf{L};\mathsf{K})=2$. We left to the reader to check the equality $\mathrm{IC}(\mathsf{L};\mathsf{K})=3$.
\end{example}

\subsection{Triangular Inequality} Given a facet simplicial map $f:\mathsf{L}\to \mathsf{H}$ and a subcomplex $\mathsf{K}$ of $\mathsf{H}$, the \textit{image inverse} of $\mathsf{K}$ through $f$ is the subcomplex $f^{-1}(\mathsf{K})$, defined as follows:
 \begin{itemize}
     \item The vertex set is given by $V(f^{-1}(\mathsf{K}))=f^{-1}(V(\mathsf{K}))$.
     \item A subset $F\subseteq f^{-1}(V(\mathsf{K}))$ is a simplex of $f^{-1}(K)$ if and only if $F\in \mathsf{L}$ and $f(F)\in \mathsf{K}$. 
 \end{itemize} Note that the restriction map $f_|:f^{-1}(V(\mathsf{K}))\to V(\mathsf{K})$ is a surjective simplicial map from the subcomplex $f^{-1}(\mathsf{K})$ to $\mathsf{K}$, called the \textit{restriction simplicial map}, and is denoted by $f_|:f^{-1}(\mathsf{K})\to \mathsf{K}$. It is not a facet simplicial map in general. However, we have the following remark.

\begin{remark}\label{rem:restriction-facet}
Let $f:\mathsf{L}\to \mathsf{H}$ be a facet simplicial map and $\mathsf{K}$ be a subcomplex of $\mathsf{H}$.
\begin{enumerate}
    \item[(1)] Observe that $f_|:f^{-1}(\mathsf{K})\to \mathsf{K}$ is a facet simplicial map whenever any facet of $f^{-1}(\mathsf{K})$ is a facet of $\mathsf{L}$. Note that if $F$ is a facet of $\mathsf{H}$ and $F\in\mathsf{K}$, then $F$ is a facet of $\mathsf{K}$.
    \item[(2)] Any facet of $f^{-1}(\mathsf{K})$ is a facet of $\mathsf{L}$ whenever any facet of $\mathsf{K}$ is a facet of $\mathsf{H}$. In fact, suppose that $F$ is a facet of $f^{-1}(\mathsf{K})$. We will check that $F$ is a facet of $\mathsf{L}$. By contradiction, suppose that $F$ is not a facet of $\mathsf{L}$, i.e., there exists a facet $G$ of $\mathsf{L}$ such that $F\subseteq G$ and $G\not\subseteq f^{-1}(V(\mathsf{K}))$. Then, $f(G)$ is a facet of $\mathsf{H}$. Since any facet of $\mathsf{K}$ is a facet of $\mathsf{H}$, we have $f(G)\subseteq K$, which is a contradiction to the statement $G\not\subseteq f^{-1}(V(\mathsf{K}))$.  
\end{enumerate}
\end{remark}
  
\medskip Given three simplicial complexes $\mathsf{L}, \mathsf{H}$, and $\mathsf{K}$, there is a relation between the hom-complexities $\mathrm{C}(\mathsf{L};\mathsf{H}), \mathrm{C}(\mathsf{H};\mathsf{K})$, and $\mathrm{C}(\mathsf{L};\mathsf{K})$. Likewise, the same holds for the injective facet-complexity. 

\begin{theorem}[Triangular Inequality]\label{thm:inequality-three-graphs}
  Let $\mathsf{L}, \mathsf{H}$, and $\mathsf{K}$ be simplicial complexes. Then, \[\mathrm{C}(\mathsf{L};\mathsf{K})\leq \mathrm{C}(\mathsf{L};\mathsf{H})\cdot \mathrm{C}(\mathsf{H};\mathsf{K}) \quad \text{ and } \quad \mathrm{IC}(\mathsf{L};\mathsf{K})\leq \mathrm{IC}(\mathsf{L};\mathsf{H})\cdot \mathrm{IC}(\mathsf{H};\mathsf{K}).\]  
\end{theorem}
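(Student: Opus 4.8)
The plan is to take optimal coverings for the two factors and glue them by a pullback-type construction, producing a covering of $\mathsf{L}$ of the required size. If either $\mathrm{C}(\mathsf{L};\mathsf{H})$ or $\mathrm{C}(\mathsf{H};\mathsf{K})$ is infinite the right-hand side is $\infty$ and there is nothing to prove, so I would assume both are finite, say $p=\mathrm{C}(\mathsf{L};\mathsf{H})$ and $q=\mathrm{C}(\mathsf{H};\mathsf{K})$. Fix subcomplexes $\mathsf{L}_1,\dots,\mathsf{L}_p$ of $\mathsf{L}$ with $\mathsf{L}=\mathsf{L}_1\cup\cdots\cup\mathsf{L}_p$ and facet simplicial maps $f_i:\mathsf{L}_i\stackrel{\mathrm{facet}}{\to}\mathsf{H}$, and subcomplexes $\mathsf{H}_1,\dots,\mathsf{H}_q$ of $\mathsf{H}$ with $\mathsf{H}=\mathsf{H}_1\cup\cdots\cup\mathsf{H}_q$ and facet simplicial maps $g_j:\mathsf{H}_j\stackrel{\mathrm{facet}}{\to}\mathsf{K}$. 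The goal is to produce, for each pair $(i,j)$, a subcomplex $\mathsf{M}_{ij}\subseteq\mathsf{L}_i$ that is facet $\mathsf{K}$-colourable, in such a way that the at most $pq$ complexes $\mathsf{M}_{ij}$ cover $\mathsf{L}$.

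For each pair $(i,j)$ I would let $\mathsf{M}_{ij}$ be the subcomplex of $\mathsf{L}_i$ generated by the non-unitary facets $G$ of $\mathsf{L}_i$ with $f_i(G)\in\mathsf{H}_j$, together with the isolated vertices $v$ of $\mathsf{L}_i$ with $f_i(v)\in V(\mathsf{H}_j)$. The essential feature of this choice — and the heart of the argument — is that every facet of $\mathsf{M}_{ij}$ is a facet of $\mathsf{L}_i$: its non-unitary facets are, by construction, among the generating facets $G$, and its unitary facets are isolated vertices of $\mathsf{L}_i$. I expect this facet-matching to be the main obstacle: the naive candidate $f_i^{-1}(\mathsf{H}_j)$ does \emph{not} work, since a facet of $f_i^{-1}(\mathsf{H}_j)$ need not be a facet of $\mathsf{L}_i$ unless every facet of $\mathsf{H}_j$ is a facet of $\mathsf{H}$ (cf. Remark~\ref{rem:restriction-facet}), and we are not free to assume that the covering $\mathsf{H}=\mathsf{H}_1\cup\cdots\cup\mathsf{H}_q$ consists of facet subcomplexes. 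Generating $\mathsf{M}_{ij}$ from selected facets of $\mathsf{L}_i$ forces the facet condition to hold by design.

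With this in hand the verification is routine. First, every simplex of $\mathsf{M}_{ij}$ maps into $\mathsf{H}_j$ under $f_i$ (a face of a generating $G$ maps into $f_i(G)\in\mathsf{H}_j$, and a selected isolated vertex maps to a vertex of $\mathsf{H}_j$), so $f_i$ restricts to a simplicial map $f_i|:\mathsf{M}_{ij}\to\mathsf{H}_j$. Since every facet of $\mathsf{M}_{ij}$ is a facet of $\mathsf{L}_i$, the restriction $f_i|$ is a facet simplicial map, by the reasoning of Remark~\ref{rem:restriction-facet}(1): a non-unitary facet of $\mathsf{M}_{ij}$ is a non-unitary facet of $\mathsf{L}_i$, hence is carried by $f_i$ to a non-unitary facet of $\mathsf{H}$ lying in $\mathsf{H}_j$, which is then a facet of $\mathsf{H}_j$. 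Composing, $g_j\circ f_i|:\mathsf{M}_{ij}\to\mathsf{K}$ is a composite of facet simplicial maps and is therefore itself a facet simplicial map (a non-unitary facet is sent to a non-unitary facet first by $f_i|$ and then by $g_j$); thus each $\mathsf{M}_{ij}$ is facet $\mathsf{K}$-colourable. It remains to check $\mathsf{L}=\bigcup_{i,j}\mathsf{M}_{ij}$: any non-unitary simplex $F$ of $\mathsf{L}$ lies in some $\mathsf{L}_i$ and hence in a non-unitary facet $G$ of $\mathsf{L}_i$, and $f_i(G)$ is a non-unitary facet of $\mathsf{H}$ contained in some $\mathsf{H}_j$, so $F\in\mathsf{M}_{ij}$; and any vertex $v$ of $\mathsf{L}$ lies in some $\mathsf{L}_i$, where it is either contained in a non-unitary facet (handled as above) or isolated, in which case $f_i(v)\in V(\mathsf{H}_j)$ for some $j$ and $v\in\mathsf{M}_{ij}$. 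Since there are at most $pq$ subcomplexes $\mathsf{M}_{ij}$, this yields $\mathrm{C}(\mathsf{L};\mathsf{K})\le pq=\mathrm{C}(\mathsf{L};\mathsf{H})\cdot\mathrm{C}(\mathsf{H};\mathsf{K})$.

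For the injective inequality I would run the identical construction starting from optimal \emph{injective} coverings, so that each $f_i$ and each $g_j$ is injective. The restriction $f_i|$ is then injective as a restriction of the injective map $f_i$, and $g_j\circ f_i|$ is a composite of injective facet simplicial maps, hence an injective facet simplicial map; the covering argument is unchanged. Here one keeps the isolated vertices of $\mathsf{L}$ in play, since injective facet-complexity is sensitive to them (as noted after Definition~\ref{defn:complexity}), which is exactly why $\mathsf{M}_{ij}$ was defined to absorb the selected isolated vertices. This gives $\mathrm{IC}(\mathsf{L};\mathsf{K})\le\mathrm{IC}(\mathsf{L};\mathsf{H})\cdot\mathrm{IC}(\mathsf{H};\mathsf{K})$.
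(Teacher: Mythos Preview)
Your proof is correct and follows the same overall strategy as the paper: decompose $\mathsf{L}$ into at most $pq$ pieces indexed by pairs $(i,j)$ and compose the two layers of facet simplicial maps. The difference lies in how the facet condition is secured. The paper sets $\mathsf{L}_{i,j}=g_i^{-1}(\mathsf{H}_j)$ and then asserts, without argument, that one may assume ``without loss of generality'' that every facet of $\mathsf{H}_j$ is a facet of $\mathsf{H}$, so that Remark~\ref{rem:restriction-facet} applies. Your construction of $\mathsf{M}_{ij}$ as the subcomplex generated by those facets of $\mathsf{L}_i$ whose image lands in $\mathsf{H}_j$ bypasses this WLOG entirely: the facet-preservation is built in by design, exactly as you observe. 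The paper's WLOG is in fact justifiable --- one can replace each $\mathsf{H}_j$ by the subcomplex generated by the facets of $\mathsf{H}$ contained in $\mathsf{H}_j$, which still covers $\mathsf{H}$ and on which $h_j$ remains a facet map --- but this is not spelled out there, so your version is the more careful of the two. Your treatment of the injective case matches the paper's.
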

\begin{proof}
  Let $m=\mathrm{C}(\mathsf{L};\mathsf{H})$ and $n=\mathrm{C}(\mathsf{H};\mathsf{K})$. Let $\mathcal{M}_1=\{g_{i}:\mathsf{L}_{i}\to \mathsf{H}\}_{i=1}^{m}$ be an optimal quasi-facet simplicial map from $\mathsf{L}$ to $\mathsf{H}$, and $\mathcal{M}_2=\{h_{j}:\mathsf{H}_{j}\to \mathsf{K}\}_{j=1}^{n}$ be an optimal quasi-facet simplicial map from $\mathsf{H}$ to $\mathsf{K}$. Define  $\mathsf{L}_{i,j}:=g_{i}^{-1}(\mathsf{H}_j)$ for each $i\in\{1,\ldots,m\}$ and each $j\in\{1,\ldots,n\}$ (noting that some $\mathsf{L}_{i,j}$ may be empty). We have $\mathsf{L}=\bigcup_{i,j=1}^{m,n} \mathsf{L}_{i,j}$. Observe that $\mathsf{L}_{i,j}$ is a subcomplex of $\mathsf{L}_i$ (and consequently a subcomplex of $\mathsf{L}$). If $\mathsf{L}_{i,j}\neq\emptyset$, we also consider the restriction simplicial map $(g_i)_|:\mathsf{L}_{i,j}\to \mathsf{H}_j$. This leads to the composition \[\mathsf{L}_{i,j}\stackrel{(g_i)_|}{\to} \mathsf{H}_j\stackrel{h_j}{\to} \mathsf{K}.\] Without leaving the generality, we can suppose that any facet of $\mathsf{H}_j$ is a facet of $\mathsf{H}$. Then, by Remark~\ref{rem:restriction-facet},  each restriction $(g_i)_|$ is a facet simplicial map. Hence, each composition $h_j\circ (g_i)_|$ is a facet simplicial map. Therefore, we obtain $\mathrm{C}(\mathsf{L};\mathsf{K})\leq m\cdot n=\mathrm{C}(\mathsf{L};\mathsf{H})\cdot \mathrm{C}(\mathsf{H};\mathsf{K})$.  

  Likewise, we obtain the inequality $\mathrm{IC}(\mathsf{L};\mathsf{K})\leq \mathrm{IC}(\mathsf{L};\mathsf{H})\cdot \mathrm{IC}(\mathsf{H};\mathsf{K})$ because if $g_{i}$ and $h_{j}$ are injective, then the composition $\mathsf{L}_{i,j}\stackrel{(g_i)_|}{\to} \mathsf{H}_j\stackrel{h_j}{\to} \mathsf{K}$ is also injective. 
\end{proof}

The inequality in Theorem~\ref{thm:inequality-three-graphs} is sharp. For instance, consider $\mathsf{K}=\mathsf{H}$; then $\mathrm{C}(\mathsf{L};\mathsf{H})= \mathrm{C}(\mathsf{L};\mathsf{H})\cdot \mathrm{C}(\mathsf{H};\mathsf{H})$. 


\subsection{Simplicial complex invariant}
The following result demonstrates that the existence of a facet simplicial map implies inequalities between the (injective) hom-complexities. 

\begin{theorem}\label{prop:complexity-subgraphs}
    Let $\mathsf{L}'\to \mathsf{L}$ and $\mathsf{H}'\to \mathsf{H}$ be facet simplicial maps.  
    \begin{itemize}
        \item[(1)] We have: \[\mathrm{C}(\mathsf{L}';\mathsf{H})\leq \mathrm{C}(\mathsf{L};\mathsf{H})\leq \mathrm{C}(\mathsf{L};\mathsf{H}').\] 
        \item[(2)] Moreover, if $\mathsf{L}'\to \mathsf{L}$ and $\mathsf{H}'\to \mathsf{H}$ are injective, then \[\mathrm{IC}(\mathsf{L}';\mathsf{H})\leq \mathrm{IC}(\mathsf{L};\mathsf{H})\leq \mathrm{IC}(\mathsf{L};\mathsf{H}').\]
    \end{itemize}
\end{theorem}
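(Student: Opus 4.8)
The plan is to prove the two chains of inequalities by composing facet simplicial maps with optimal quasi-facet simplicial maps and checking that the resulting compositions are again facet simplicial maps (and, in the injective case, injective). I will treat the four inequalities in turn, but they split naturally into two dual patterns: precomposition (which handles the left inequalities $\mathrm{C}(\mathsf{L}';\mathsf{H})\leq\mathrm{C}(\mathsf{L};\mathsf{H})$ and its injective analogue) and postcomposition (which handles the right inequalities $\mathrm{C}(\mathsf{L};\mathsf{H})\leq\mathrm{C}(\mathsf{L};\mathsf{H}')$).

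For the left inequality, I would start with a facet simplicial map $\varphi:\mathsf{L}'\to\mathsf{L}$ and an optimal quasi-facet simplicial map $\mathcal{M}=\{f_i:\mathsf{L}_i\to\mathsf{H}\}_{i=1}^{k}$ from $\mathsf{L}$ to $\mathsf{H}$, where $k=\mathrm{C}(\mathsf{L};\mathsf{H})$. The idea is to pull back the decomposition along $\varphi$: set $\mathsf{L}'_i:=\varphi^{-1}(\mathsf{L}_i)$, so that $\mathsf{L}'=\mathsf{L}'_1\cup\cdots\cup\mathsf{L}'_k$, and form the compositions $f_i\circ\varphi_|:\mathsf{L}'_i\to\mathsf{H}$. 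Using Remark~\ref{rem:restriction-facet} (after arranging, without loss of generality, that any facet of $\mathsf{L}_i$ is a facet of $\mathsf{L}$) each restriction $\varphi_|:\mathsf{L}'_i\to\mathsf{L}_i$ is a facet simplicial map, and since a composition of facet simplicial maps is again a facet simplicial map, we obtain a quasi-facet simplicial map from $\mathsf{L}'$ to $\mathsf{H}$ with $k$ pieces. This yields $\mathrm{C}(\mathsf{L}';\mathsf{H})\leq k=\mathrm{C}(\mathsf{L};\mathsf{H})$, and in the injective case the compositions of injective maps stay injective, giving the corresponding bound for $\mathrm{IC}$.

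For the right inequality, I would start with a facet simplicial map $\psi:\mathsf{H}'\to\mathsf{H}$ and an optimal quasi-facet simplicial map $\{g_i:\mathsf{L}_i\to\mathsf{H}'\}_{i=1}^{\ell}$ from $\mathsf{L}$ to $\mathsf{H}'$, where $\ell=\mathrm{C}(\mathsf{L};\mathsf{H}')$. Here the construction is simpler: the same decomposition $\mathsf{L}=\mathsf{L}_1\cup\cdots\cup\mathsf{L}_\ell$ works, and I just postcompose, forming $\psi\circ g_i:\mathsf{L}_i\to\mathsf{H}$. Each such composition is a facet simplicial map because facet simplicial maps are closed under composition, so this is a quasi-facet simplicial map from $\mathsf{L}$ to $\mathsf{H}$ with $\ell$ pieces, giving $\mathrm{C}(\mathsf{L};\mathsf{H})\leq\ell=\mathrm{C}(\mathsf{L};\mathsf{H}')$. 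The injective statement again follows because a composition of injective maps is injective.

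The main obstacle is the left inequality, and specifically the subtlety already flagged in Remark~\ref{rem:restriction-facet}: the restriction $\varphi_|:\varphi^{-1}(\mathsf{L}_i)\to\mathsf{L}_i$ of a facet simplicial map need not be a facet simplicial map in general, because a facet of the preimage $\varphi^{-1}(\mathsf{L}_i)$ might fail to be a facet of $\mathsf{L}'$. I would resolve this exactly as in the proof of Theorem~\ref{thm:inequality-three-graphs}, invoking Remark~\ref{rem:restriction-facet}(2) under the harmless assumption that every facet of each $\mathsf{L}_i$ is a facet of $\mathsf{L}$ (one may always replace the $\mathsf{L}_i$ by facet subcomplexes without increasing their number). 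The right inequality, by contrast, is essentially immediate once one observes that facet simplicial maps compose. I should also note at the outset that composition of facet (respectively injective facet) simplicial maps is again facet (respectively injective facet); this elementary fact underlies all four inequalities and can be stated once and reused.
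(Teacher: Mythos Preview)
Your proposal is correct and is essentially the paper's approach spelled out in detail: the paper's proof is a one-line application of the triangular inequality (Theorem~\ref{thm:inequality-three-graphs}), using $\mathrm{C}(\mathsf{L}';\mathsf{L})=1$ and $\mathrm{C}(\mathsf{H}';\mathsf{H})=1$ (respectively their injective analogues), and your precomposition/postcomposition argument is precisely the content of that inequality unpacked for this special case, including the same handling of the restriction issue via Remark~\ref{rem:restriction-facet}.
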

\begin{proof}
 It follows as a direct application of the triangular inequality (Theorem~\ref{thm:inequality-three-graphs}). 
\end{proof}

From Theorem~\ref{prop:complexity-subgraphs}(1), we observe that if $\mathsf{L}'\stackrel{\mathrm{facet}}{\to} \mathsf{L}$ and $\mathsf{L}\stackrel{\mathrm{facet}}{\to} \mathsf{L}'$, then $\mathrm{C}(\mathsf{L}';\mathsf{H})=\mathrm{C}(\mathsf{L};\mathsf{H})$ for any simplicial complex $\mathsf{H}$. Similarly, if $\mathsf{H}'\stackrel{\mathrm{facet}}{\to} \mathsf{H}$ and $\mathsf{H}\stackrel{\mathrm{facet}}{\to} \mathsf{H}'$, then $\mathrm{C}(\mathsf{L};\mathsf{H}')=\mathrm{C}(\mathsf{L};\mathsf{H})$ for any simplicial complex $\mathsf{L}$. In particular, this shows that (injective) facet-complexity is a simplicial complex invariant, meaning it is preserved under isomorphisms.

\begin{corollary}[Simplicial Complex Invariant]\label{cor:invariant-iso-complexity}
    If $\mathsf{L}'$ is isomorphic to $\mathsf{L}$ and $\mathsf{H}'$ is isomorphic to $\mathsf{H}$, then \[\mathrm{C}(\mathsf{L};\mathsf{H})=\mathrm{C}(\mathsf{L}';\mathsf{H}') \quad \text{ and } \quad \mathrm{IC}(\mathsf{L};\mathsf{H})=\mathrm{IC}(\mathsf{L}';\mathsf{H}').\]
\end{corollary}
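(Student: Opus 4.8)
The plan is to derive Corollary~\ref{cor:invariant-iso-complexity} as an immediate consequence of Theorem~\ref{prop:complexity-subgraphs}, which is the engine that does all the real work. The key observation is that an isomorphism of simplicial complexes is, by the discussion following Definition~\ref{defn:facet-map}, precisely a bijective facet simplicial map whose inverse is again a facet simplicial map; in particular, if $\mathsf{L}'$ is isomorphic to $\mathsf{L}$ then both $\mathsf{L}'\stackrel{\mathrm{facet}}{\to}\mathsf{L}$ and $\mathsf{L}\stackrel{\mathrm{facet}}{\to}\mathsf{L}'$ hold, and moreover both of these facet simplicial maps are injective (indeed bijective). The same applies to $\mathsf{H}'$ and $\mathsf{H}$. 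So the whole corollary reduces to feeding these mutually inverse isomorphisms into the inequalities of Theorem~\ref{prop:complexity-subgraphs}.

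The concrete steps I would carry out are as follows. First, fix an isomorphism $\varphi:\mathsf{L}'\to\mathsf{L}$; then $\varphi$ and $\varphi^{-1}$ are both (injective) facet simplicial maps, giving $\mathsf{L}'\stackrel{\mathrm{facet}}{\to}\mathsf{L}$ and $\mathsf{L}\stackrel{\mathrm{facet}}{\to}\mathsf{L}'$. Applying the left-hand inequality of Theorem~\ref{prop:complexity-subgraphs}(1) to the facet simplicial map $\mathsf{L}'\to\mathsf{L}$ yields $\mathrm{C}(\mathsf{L}';\mathsf{H})\leq\mathrm{C}(\mathsf{L};\mathsf{H})$, and applying it to $\mathsf{L}\to\mathsf{L}'$ yields the reverse inequality $\mathrm{C}(\mathsf{L};\mathsf{H})\leq\mathrm{C}(\mathsf{L}';\mathsf{H})$; together these force $\mathrm{C}(\mathsf{L};\mathsf{H})=\mathrm{C}(\mathsf{L}';\mathsf{H})$, as already noted in the paragraph preceding the corollary. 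Second, fix an isomorphism $\psi:\mathsf{H}'\to\mathsf{H}$ and apply the right-hand inequality of Theorem~\ref{prop:complexity-subgraphs}(1) in both directions to obtain $\mathrm{C}(\mathsf{L};\mathsf{H})=\mathrm{C}(\mathsf{L};\mathsf{H}')$. Chaining these two equalities gives
\[
\mathrm{C}(\mathsf{L};\mathsf{H})=\mathrm{C}(\mathsf{L}';\mathsf{H})=\mathrm{C}(\mathsf{L}';\mathsf{H}'),
\]
which is the first claimed identity.

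For the injective statement I would repeat the argument verbatim, this time invoking Theorem~\ref{prop:complexity-subgraphs}(2). The only point that needs checking is its hypothesis that the maps $\mathsf{L}'\to\mathsf{L}$ and $\mathsf{H}'\to\mathsf{H}$ be injective, and this is automatic: the isomorphisms $\varphi,\varphi^{-1},\psi,\psi^{-1}$ are all bijective, hence injective, facet simplicial maps. Thus the same two-sided comparison yields $\mathrm{IC}(\mathsf{L};\mathsf{H})=\mathrm{IC}(\mathsf{L}';\mathsf{H})=\mathrm{IC}(\mathsf{L}';\mathsf{H}')$.

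There is essentially no obstacle here, since the substantive content has been discharged in Theorem~\ref{prop:complexity-subgraphs} (itself a corollary of the triangular inequality); the proof is purely formal bookkeeping. The one subtlety worth stating explicitly, so the reader is not left wondering, is the identification of isomorphisms with mutually inverse (injective) facet simplicial maps. This is exactly the remark made after Definition~\ref{defn:facet-map} that ``facet isomorphisms coincide with isomorphisms,'' so I would cite that sentence rather than re-prove it. Hence the corollary follows directly from Theorem~\ref{prop:complexity-subgraphs} applied twice in each variable.
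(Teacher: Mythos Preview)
Your proposal is correct and follows exactly the approach the paper takes: the corollary is stated immediately after the paragraph observing that mutual facet simplicial maps in both directions force equality of $\mathrm{C}$ (and likewise $\mathrm{IC}$), and the paper, like you, derives this directly from Theorem~\ref{prop:complexity-subgraphs} together with the remark after Definition~\ref{defn:facet-map} that isomorphisms coincide with facet isomorphisms. There is nothing to add.
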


Furthermore, Theorem~\ref{prop:complexity-subgraphs} implies that facet-complexity provides a numerical obstruction to the existence of a facet  simplicial map.  

\begin{proposition}\label{prop:no-existencia}
\noindent\begin{enumerate}
    \item[(1)]  Let $\mathsf{L}$ and $\mathsf{L}'$ be simplicial complexes. We have:
    \begin{enumerate}
        \item[(i)]  If $\mathrm{C}(\mathsf{L}';\mathsf{H})>\mathrm{C}(\mathsf{L};\mathsf{H})$ for some simplicial complex $\mathsf{H}$, then $\mathsf{L}'\stackrel{\mathrm{facet}}{\not\to} \mathsf{L}$. 
         \item[(ii)]  If $\mathrm{IC}(\mathsf{L}';\mathsf{H})>\mathrm{IC}(\mathsf{L};\mathsf{H})$ for some simplicial complex $\mathsf{H}$, then there is no injective facet simplicial map from $\mathsf{L}'$ to  $\mathsf{L}$. 
    \end{enumerate}
     \item[(2)]  Let $\mathsf{H}$ and $\mathsf{H}'$ be simplicial complexes. We have:
    \begin{enumerate}
        \item[(i)]  If $\mathrm{C}(\mathsf{L};\mathsf{H})>\mathrm{C}(\mathsf{L};\mathsf{H}')$ for some simplicial complex $\mathsf{L}$, then $\mathsf{H}'\stackrel{\mathrm{facet}}{\not\to} \mathsf{H}$. 
         \item[(ii)] If $\mathrm{IC}(\mathsf{L};\mathsf{H})>\mathrm{IC}(\mathsf{L};\mathsf{H}')$ for some simplicial complex $\mathsf{L}$, then there is no injective facet simplicial map from $\mathsf{L}'$ to  $\mathsf{H}$. 
    \end{enumerate}
\end{enumerate}    
\end{proposition}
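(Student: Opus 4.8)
The plan is to derive Proposition~\ref{prop:no-existencia} as the contrapositive of the monotonicity statement in Theorem~\ref{prop:complexity-subgraphs}. The key observation is that each implication in the proposition is logically equivalent to a statement of the form ``if a (injective) facet simplicial map exists, then a certain inequality between facet-complexities holds,'' which is precisely what Theorem~\ref{prop:complexity-subgraphs} provides. So the entire argument is a sequence of four contrapositive applications, with no new constructions needed.

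First I would treat part (1)(i). Suppose for contradiction that $\mathsf{L}'\stackrel{\mathrm{facet}}{\to} \mathsf{L}$, i.e., there exists a facet simplicial map $\mathsf{L}'\to \mathsf{L}$. Applying Theorem~\ref{prop:complexity-subgraphs}(1) with this facet simplicial map (in the role of $\mathsf{L}'\to \mathsf{L}$) and the identity facet simplicial map $\mathsf{H}\to \mathsf{H}$ (in the role of $\mathsf{H}'\to \mathsf{H}$), I obtain $\mathrm{C}(\mathsf{L}';\mathsf{H})\leq \mathrm{C}(\mathsf{L};\mathsf{H})$. This directly contradicts the hypothesis $\mathrm{C}(\mathsf{L}';\mathsf{H})>\mathrm{C}(\mathsf{L};\mathsf{H})$, so no such map can exist, giving $\mathsf{L}'\stackrel{\mathrm{facet}}{\not\to} \mathsf{L}$. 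Part (2)(i) is entirely analogous: assuming $\mathsf{H}'\stackrel{\mathrm{facet}}{\to} \mathsf{H}$ and applying the right-hand inequality $\mathrm{C}(\mathsf{L};\mathsf{H})\leq \mathrm{C}(\mathsf{L};\mathsf{H}')$ from Theorem~\ref{prop:complexity-subgraphs}(1) (using the identity on $\mathsf{L}$) contradicts $\mathrm{C}(\mathsf{L};\mathsf{H})>\mathrm{C}(\mathsf{L};\mathsf{H}')$.

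For the injective statements (1)(ii) and (2)(ii), I would repeat the same contrapositive reasoning but invoke Theorem~\ref{prop:complexity-subgraphs}(2) instead, noting that the assumed map in each contradiction hypothesis is taken to be an \emph{injective} facet simplicial map, so the injectivity requirement of Theorem~\ref{prop:complexity-subgraphs}(2) is met (the complementary identity map is trivially an injective facet simplicial map). This yields $\mathrm{IC}(\mathsf{L}';\mathsf{H})\leq \mathrm{IC}(\mathsf{L};\mathsf{H})$ in case (1)(ii) and $\mathrm{IC}(\mathsf{L};\mathsf{H})\leq \mathrm{IC}(\mathsf{L};\mathsf{H}')$ in case (2)(ii), each contradicting the respective strict-inequality hypothesis.

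I do not anticipate a genuine obstacle here, since the proposition is formally a collection of contrapositives of an already-established theorem; the only point requiring mild care is correctly matching each of the four inequalities to the appropriate instance of Theorem~\ref{prop:complexity-subgraphs} (choosing which of $\mathsf{L}'\to\mathsf{L}$ or $\mathsf{H}'\to\mathsf{H}$ is the nontrivial map and which is the identity), and confirming that in the injective cases the identity map indeed qualifies as an injective facet simplicial map so that part (2) of the theorem applies.
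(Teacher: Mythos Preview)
Your proposal is correct and matches the paper's own proof, which simply states that it suffices to take the contrapositive of each implication in Theorem~\ref{prop:complexity-subgraphs}. You have merely spelled out in more detail which instance of that theorem (and which identity map) is being used in each of the four cases.
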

\begin{proof}
   It is sufficient to use the contrapositive of each implication in Theorem~\ref{prop:complexity-subgraphs}.
\end{proof}

\subsection{Sub-additivity} 
 The following statement demonstrates the sub-additivity property of (injective) facet-complexity. 

\begin{theorem}[Sub-additivity]\label{thm:category-union}
    Let $\mathsf{L},\mathsf{H}$ be simplicial complexes, and let $\mathsf{A},\mathsf{B}$ be facet subcomplexes of $\mathsf{L}$ such that $\mathsf{L}=\mathsf{A}\cup \mathsf{B}$. Then: \begin{itemize}
        \item[(1)] $\max\{\mathrm{C}(\mathsf{A};\mathsf{H}),\mathrm{C}(\mathsf{B};\mathsf{H})\}\leq \mathrm{C}(\mathsf{L};\mathsf{H})\leq \mathrm{C}(\mathsf{A};\mathsf{H})+\mathrm{C}(\mathsf{B};\mathsf{H}).$ 
         \item[(2)] $\max\{\mathrm{IC}(\mathsf{A};\mathsf{H}),\mathrm{IC}(\mathsf{B};\mathsf{H})\}\leq \mathrm{IC}(\mathsf{L};\mathsf{H})\leq \mathrm{IC}(\mathsf{A};\mathsf{H})+\mathrm{IC}(\mathsf{B};\mathsf{H}).$ 
    \end{itemize} 
\end{theorem}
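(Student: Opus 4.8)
The plan is to establish the two inequalities in each part separately, treating the upper bounds (sub-additivity) and lower bounds (the maximum) independently. I will focus on part (1); part (2) follows by the identical argument with the word \emph{injective} inserted throughout, since the disjoint-union construction in Definition~\ref{defn:union-graphs}(2) preserves injectivity.

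For the upper bound $\mathrm{C}(\mathsf{L};\mathsf{H})\leq \mathrm{C}(\mathsf{A};\mathsf{H})+\mathrm{C}(\mathsf{B};\mathsf{H})$, the idea is to concatenate optimal decompositions of $\mathsf{A}$ and $\mathsf{B}$. Write $p=\mathrm{C}(\mathsf{A};\mathsf{H})$ and $q=\mathrm{C}(\mathsf{B};\mathsf{H})$, and take an optimal quasi-facet simplicial map $\{f_i:\mathsf{A}_i\to \mathsf{H}\}_{i=1}^{p}$ covering $\mathsf{A}=\mathsf{A}_1\cup\cdots\cup\mathsf{A}_p$ and likewise $\{g_j:\mathsf{B}_j\to \mathsf{H}\}_{j=1}^{q}$ covering $\mathsf{B}$. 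Since $\mathsf{L}=\mathsf{A}\cup\mathsf{B}$, the $p+q$ subcomplexes $\mathsf{A}_1,\ldots,\mathsf{A}_p,\mathsf{B}_1,\ldots,\mathsf{B}_q$ cover $\mathsf{L}$. The one point requiring care is that each $\mathsf{A}_i$ and each $\mathsf{B}_j$ must remain a \emph{subcomplex of $\mathsf{L}$} whose facet simplicial map to $\mathsf{H}$ still makes sense; this is where the hypothesis that $\mathsf{A}$ and $\mathsf{B}$ are \emph{facet subcomplexes} of $\mathsf{L}$ is used, because then every facet of $\mathsf{A}_i$ (a facet subcomplex relation being transitive in the relevant sense) is still a facet of $\mathsf{L}$, so the maps $f_i$ and $g_j$ are genuine facet simplicial maps out of subcomplexes of $\mathsf{L}$. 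This yields a quasi-facet simplicial map from $\mathsf{L}$ to $\mathsf{H}$ with $p+q$ pieces, whence $\mathrm{C}(\mathsf{L};\mathsf{H})\leq p+q$.

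For the lower bound $\max\{\mathrm{C}(\mathsf{A};\mathsf{H}),\mathrm{C}(\mathsf{B};\mathsf{H})\}\leq \mathrm{C}(\mathsf{L};\mathsf{H})$, the cleanest route is to invoke Theorem~\ref{prop:complexity-subgraphs}(1) directly. Because $\mathsf{A}$ is a facet subcomplex of $\mathsf{L}$, the inclusion $\mathsf{A}\hookrightarrow \mathsf{L}$ is a facet simplicial map (as noted right after the definition of facet subcomplex). Applying Theorem~\ref{prop:complexity-subgraphs}(1) with $\mathsf{L}'=\mathsf{A}$ gives $\mathrm{C}(\mathsf{A};\mathsf{H})\leq \mathrm{C}(\mathsf{L};\mathsf{H})$, and symmetrically $\mathrm{C}(\mathsf{B};\mathsf{H})\leq \mathrm{C}(\mathsf{L};\mathsf{H})$; taking the maximum finishes it. For part (2) the inclusions $\mathsf{A}\hookrightarrow\mathsf{L}$ and $\mathsf{B}\hookrightarrow\mathsf{L}$ are moreover \emph{injective} facet simplicial maps, so Theorem~\ref{prop:complexity-subgraphs}(2) applies verbatim.

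I expect the main obstacle to lie entirely in the upper-bound argument, specifically in verifying that each piece $\mathsf{A}_i$ of the optimal decomposition of $\mathsf{A}$ retains its facet simplicial map to $\mathsf{H}$ when reinterpreted as a subcomplex of $\mathsf{L}$. A facet of $\mathsf{A}_i$ need not a priori be a facet of $\mathsf{L}$, even though facets of $\mathsf{A}$ are facets of $\mathsf{L}$; the facet-subcomplex hypothesis on $\mathsf{A}$ is exactly what bridges this gap, and I would make explicit that a facet of $\mathsf{A}_i$ that is non-unitary maps, under $f_i$, to a non-unitary facet of $\mathsf{H}$ precisely because $f_i$ was already a facet simplicial map relative to $\mathsf{A}$ and the facet structure is inherited through the facet-subcomplex assumption. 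The lower bound, by contrast, is immediate from the previously established monotonicity theorem and carries no real difficulty.
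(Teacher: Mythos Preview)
Your proof is correct and follows exactly the paper's two-step structure: the lower bound via Theorem~\ref{prop:complexity-subgraphs} applied to the (injective) facet inclusions $\mathsf{A}\hookrightarrow\mathsf{L}$ and $\mathsf{B}\hookrightarrow\mathsf{L}$, and the upper bound by concatenating optimal covers of $\mathsf{A}$ and $\mathsf{B}$ into a single quasi-facet simplicial map from $\mathsf{L}$.

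One point of confusion worth correcting: the facet-subcomplex hypothesis on $\mathsf{A}$ and $\mathsf{B}$ is used \emph{only} for the lower bound (to make the inclusions facet simplicial maps so that Theorem~\ref{prop:complexity-subgraphs} applies), not for the upper bound. Whether $f_i:\mathsf{A}_i\to\mathsf{H}$ is a facet simplicial map is intrinsic to $\mathsf{A}_i$ and $\mathsf{H}$ --- the facets of $\mathsf{A}_i$ are its own maximal simplices, independent of any ambient complex --- so nothing further is required to regard the $\mathsf{A}_i$ and $\mathsf{B}_j$ as pieces of a quasi-facet simplicial map from $\mathsf{L}$. In particular, your assertion that ``every facet of $\mathsf{A}_i$ is still a facet of $\mathsf{L}$'' is both unnecessary and false in general: the $\mathsf{A}_i$ in an optimal cover of $\mathsf{A}$ are arbitrary subcomplexes, not facet subcomplexes, so the transitivity you invoke does not apply. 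The paper's proof simply omits this discussion for the upper bound, and you may safely do the same.
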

\begin{proof}
\noindent \begin{itemize}
        \item[(1)] The inequality $\max\{\mathrm{C}(\mathsf{A};\mathsf{H}),\mathrm{C}(\mathsf{B};\mathsf{H})\}\leq \mathrm{C}(\mathsf{L};\mathsf{H})$ follows from Theorem~\ref{prop:complexity-subgraphs}(1), applied to the inclusions $\mathsf{A}\hookrightarrow \mathsf{L}$ and $\mathsf{B}\hookrightarrow \mathsf{L}$. To demonstrate the other inequality, suppose that $\mathrm{C}(\mathsf{A};\mathsf{H})=m$ and $\mathrm{C}(\mathsf{B};\mathsf{H})=k$. Let $\{f_i:\mathsf{A}_i\to \mathsf{H}\}_{i=1}^{m}$ be an optimal quasi-facet simplicial map from $\mathsf{A}$ to $\mathsf{H}$, and $\{g_j:\mathsf{B}_j\to \mathsf{H}\}_{j=1}^{k}$ be an optimal quasi-facet simplicial map from $\mathsf{B}$ to $\mathsf{H}$. The combined collection $\{f_1:\mathsf{A}_1\to \mathsf{H},\ldots,f_m:\mathsf{A}_m\to \mathsf{H},g_1:\mathsf{B}_1\to \mathsf{H},\ldots,g_k:\mathsf{B}_k\to \mathsf{H}\}$ forms a quasi-facet simplicial map from $\mathsf{L}$ to $\mathsf{H}$. Therefore, we have $\mathrm{C}(\mathsf{L};\mathsf{H})\leq m+k=\mathrm{C}(\mathsf{A};\mathsf{H})+\mathrm{C}(\mathsf{B};\mathsf{H})$. 

        This completes the proof of the sub-additivity of facet-complexity.

        \item[(2)] Likewise, we obtain the sub-additivity of injective facet-complexity.
    \end{itemize}  
\end{proof}

Theorem~\ref{thm:category-union} implies the following corollary:

\begin{corollary}\label{cor:linear-complexity}
    Let $\mathsf{L}$ and $\mathsf{H}$ be simplicial complexes, and $\mathsf{A}$ and $\mathsf{T}$ be facet subcomplexes of $\mathsf{L}$ such that $\mathsf{L}=\mathsf{A}\cup \mathsf{T}$. Then:
    \begin{enumerate}
        \item[(1)] If $\mathrm{C}(\mathsf{T};\mathsf{H})=1$, then \[\mathrm{C}(\mathsf{A};\mathsf{H})\leq \mathrm{C}(\mathsf{L};\mathsf{H})\leq \mathrm{C}(\mathsf{A};\mathsf{H})+1.\] 
        \item[(2)] If $\mathrm{IC}(\mathsf{T};\mathsf{H})=1$, then \[\mathrm{IC}(\mathsf{A};\mathsf{H})\leq \mathrm{IC}(\mathsf{L};\mathsf{H})\leq \mathrm{IC}(\mathsf{A};\mathsf{H})+1.\] 
    \end{enumerate}
\end{corollary}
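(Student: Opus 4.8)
The plan is to derive Corollary~\ref{cor:linear-complexity} directly from the sub-additivity inequality established in Theorem~\ref{thm:category-union}, treating the two cases in parallel. Since $\mathsf{A}$ and $\mathsf{T}$ are facet subcomplexes of $\mathsf{L}$ with $\mathsf{L}=\mathsf{A}\cup\mathsf{T}$, Theorem~\ref{thm:category-union}(1) applies verbatim with $\mathsf{B}$ replaced by $\mathsf{T}$, yielding
\[
\max\{\mathrm{C}(\mathsf{A};\mathsf{H}),\mathrm{C}(\mathsf{T};\mathsf{H})\}\leq \mathrm{C}(\mathsf{L};\mathsf{H})\leq \mathrm{C}(\mathsf{A};\mathsf{H})+\mathrm{C}(\mathsf{T};\mathsf{H}).
\]
The whole argument is then a substitution of the hypothesis $\mathrm{C}(\mathsf{T};\mathsf{H})=1$ into this chain.

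For the lower bound, I would observe that $\max\{\mathrm{C}(\mathsf{A};\mathsf{H}),\mathrm{C}(\mathsf{T};\mathsf{H})\}\geq \mathrm{C}(\mathsf{A};\mathsf{H})$ trivially, so the left-hand inequality of Theorem~\ref{thm:category-union}(1) immediately gives $\mathrm{C}(\mathsf{A};\mathsf{H})\leq \mathrm{C}(\mathsf{L};\mathsf{H})$. For the upper bound, substituting $\mathrm{C}(\mathsf{T};\mathsf{H})=1$ into the right-hand inequality gives $\mathrm{C}(\mathsf{L};\mathsf{H})\leq \mathrm{C}(\mathsf{A};\mathsf{H})+1$. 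Combining the two yields part (1). Part (2) follows by the identical reasoning applied to Theorem~\ref{thm:category-union}(2), using the injective sub-additivity bound and the hypothesis $\mathrm{IC}(\mathsf{T};\mathsf{H})=1$; I would just remark that the injective case is verbatim the same with $\mathrm{C}$ replaced by $\mathrm{IC}$ throughout.

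There is essentially no obstacle here: the corollary is a direct specialization of the preceding theorem, and the only thing worth double-checking is that the hypotheses of Theorem~\ref{thm:category-union} are genuinely met—namely that $\mathsf{A}$ and $\mathsf{T}$ are \emph{facet} subcomplexes (not merely subcomplexes) of $\mathsf{L}$ and that $\mathsf{L}=\mathsf{A}\cup\mathsf{T}$, both of which are stipulated in the statement of the corollary. Since these conditions transfer directly, the proof is a one-line invocation in each case. The main thing I would be careful about is simply citing Theorem~\ref{thm:category-union} with the correct part number for each case, and making explicit that $\max\{x,1\}=x$ is being used implicitly via $\max\{\mathrm{C}(\mathsf{A};\mathsf{H}),1\}=\mathrm{C}(\mathsf{A};\mathsf{H})$ whenever $\mathrm{C}(\mathsf{A};\mathsf{H})\geq 1$ (which holds by definition, as facet-complexity is a positive integer). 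This subtlety is worth a brief remark so that the lower bound is fully justified rather than merely asserted.
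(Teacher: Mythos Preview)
Your proposal is correct and matches the paper's approach exactly: the paper simply states that the corollary follows from Theorem~\ref{thm:category-union}, and your argument spells out precisely this substitution of $\mathrm{C}(\mathsf{T};\mathsf{H})=1$ (respectively $\mathrm{IC}(\mathsf{T};\mathsf{H})=1$) into the sub-additivity bounds.
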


The following result shows that the first inequality of Theorem~\ref{thm:category-union}(1) can be an equality. 

\begin{proposition}\label{prop:complexity-disjoint-union}
  Let $\mathsf{L}$ be a simplicial complex, and let $\mathsf{A}$ and $\mathsf{B}$ be facet subcomplexes of $\mathsf{L}$ such that $V(\mathsf{A})\cap V(\mathsf{B})=\emptyset$ and $\mathsf{L}=\mathsf{A}\sqcup \mathsf{B}$ (see Definition~\ref{defn:union-graphs}(2)). Then, for any simplicial complex $\mathsf{H}$, we have \[\mathrm{C}(\mathsf{L};\mathsf{H})=\max\{\mathrm{C}(\mathsf{A};\mathsf{H}),\mathrm{C}(\mathsf{B},\mathsf{H})\}.\]   
\end{proposition}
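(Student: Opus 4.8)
The plan is to pair the lower bound, which is already available, with an explicit construction realizing the matching upper bound. The inequality $\max\{\mathrm{C}(\mathsf{A};\mathsf{H}),\mathrm{C}(\mathsf{B};\mathsf{H})\}\leq \mathrm{C}(\mathsf{L};\mathsf{H})$ is immediate from Theorem~\ref{thm:category-union}(1), which applies precisely because $\mathsf{A}$ and $\mathsf{B}$ are facet subcomplexes of $\mathsf{L}$ (so that the inclusions $\mathsf{A}\hookrightarrow\mathsf{L}$ and $\mathsf{B}\hookrightarrow\mathsf{L}$ are facet simplicial maps). Hence the whole content lies in the reverse inequality $\mathrm{C}(\mathsf{L};\mathsf{H})\leq \max\{\mathrm{C}(\mathsf{A};\mathsf{H}),\mathrm{C}(\mathsf{B};\mathsf{H})\}$.

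For the upper bound I would set $m=\mathrm{C}(\mathsf{A};\mathsf{H})$ and $k=\mathrm{C}(\mathsf{B};\mathsf{H})$ and, by the symmetry of the statement in $\mathsf{A}$ and $\mathsf{B}$, assume $m\geq k$. Choose an optimal quasi-facet simplicial map $\{f_i:\mathsf{A}_i\to\mathsf{H}\}_{i=1}^m$ from $\mathsf{A}$ to $\mathsf{H}$ and an optimal one $\{g_j:\mathsf{B}_j\to\mathsf{H}\}_{j=1}^k$ from $\mathsf{B}$ to $\mathsf{H}$. The idea is to amalgamate these two covers index by index rather than concatenating them; concatenation would only reproduce the additive bound of Theorem~\ref{thm:category-union}, whereas the hypothesis $V(\mathsf{A})\cap V(\mathsf{B})=\emptyset$ lets us overlay the pieces for free. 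Since $\mathsf{A}_i\subseteq\mathsf{A}$ and $\mathsf{B}_i\subseteq\mathsf{B}$, the vertex sets $V(\mathsf{A}_i)$ and $V(\mathsf{B}_i)$ are disjoint, so the disjoint union $\mathsf{A}_i\sqcup\mathsf{B}_i$ is a well-defined subcomplex of $\mathsf{L}$.

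I would then define $\mathsf{L}_i=\mathsf{A}_i\sqcup\mathsf{B}_i$ with map $f_i\sqcup g_i$ for $1\leq i\leq k$, and $\mathsf{L}_i=\mathsf{A}_i$ with map $f_i$ for $k<i\leq m$. By Definition~\ref{defn:union-graphs}(2), each $f_i\sqcup g_i:\mathsf{A}_i\sqcup\mathsf{B}_i\to\mathsf{H}$ is a facet simplicial map. It remains to check the covering identity, which follows from \[\bigcup_{i=1}^m\mathsf{L}_i=\Bigl(\bigcup_{i=1}^m\mathsf{A}_i\Bigr)\cup\Bigl(\bigcup_{j=1}^k\mathsf{B}_j\Bigr)=\mathsf{A}\cup\mathsf{B}=\mathsf{L}.\] This exhibits a quasi-facet simplicial map from $\mathsf{L}$ to $\mathsf{H}$ with $m$ members, giving $\mathrm{C}(\mathsf{L};\mathsf{H})\leq m=\max\{m,k\}$ and completing the argument.

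The step deserving the most care — though it is not really an obstacle — is confirming that the overlaid maps $f_i\sqcup g_i$ genuinely preserve non-unitary facets. This is exactly the content of Definition~\ref{defn:union-graphs}(2) and rests essentially on $V(\mathsf{A})\cap V(\mathsf{B})=\emptyset$: because the two vertex sets are disjoint, every non-unitary facet of $\mathsf{L}_i$ lies entirely within $\mathsf{A}_i$ or entirely within $\mathsf{B}_i$ and cannot straddle the two pieces, so it is sent to a non-unitary facet of $\mathsf{H}$ by the relevant component map. I note that the same construction, with each component map taken injective and the shorter cover padded exactly as above, shows $\mathrm{IC}(\mathsf{L};\mathsf{H})=\max\{\mathrm{IC}(\mathsf{A};\mathsf{H}),\mathrm{IC}(\mathsf{B};\mathsf{H})\}$, since the disjoint union of injective facet simplicial maps is again injective.
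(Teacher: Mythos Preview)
Your proof of the proposition is correct and essentially identical to the paper's: both obtain the lower bound from Theorem~\ref{thm:category-union}(1) and the upper bound by overlaying the two covers index by index via the disjoint-union construction of Definition~\ref{defn:union-graphs}(2). The only cosmetic difference is that the paper pads both covers to the common length $m=\max\{\mathrm{C}(\mathsf{A};\mathsf{H}),\mathrm{C}(\mathsf{B};\mathsf{H})\}$ before pairing, whereas you keep the shorter cover at length $k$ and leave the last $m-k$ pieces unpaired; this is the same argument.

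Your closing remark about $\mathrm{IC}$, however, is not correct as stated. The disjoint union $f_i\sqcup g_i$ of two injective facet simplicial maps need not be injective: injectivity on $V(\mathsf{A}_i)\sqcup V(\mathsf{B}_i)$ requires not only that $f_i$ and $g_i$ be individually injective but also that their images in $V(\mathsf{H})$ be disjoint, and nothing in the hypotheses guarantees this. (Indeed, the paper does not assert an injective analogue of this proposition.)
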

\begin{proof}
 The inequality $\max\{\mathrm{C}(\mathsf{A};\mathsf{H}),\mathrm{C}(\mathsf{B},\mathsf{H})\}\leq \mathrm{C}(\mathsf{L};\mathsf{H})$ follows from Theorem~\ref{thm:category-union}(1). We will now verify the inequality $\mathrm{C}(\mathsf{L};\mathsf{H})\leq m$, where $m=\max\{\mathrm{C}(\mathsf{A};\mathsf{H}),\mathrm{C}(\mathsf{B},\mathsf{H})\}$. In fact, let $\{f_i:\mathsf{A}_i\to \mathsf{H}\}_{i=1}^{m}$ and $\{g_i:\mathsf{B}_i\to \mathsf{H}\}_{i=1}^{m}$ be quasi-facet simplicial maps from $\mathsf{A}$ to $\mathsf{H}$ and from $\mathsf{B}$ to $\mathsf{H}$, respectively. Note that the collection $\{f_i\sqcup g_i:\mathsf{A}_i\sqcup \mathsf{B}_i\to \mathsf{H}\}_{i=1}^{m}$ forms a quasi-facet simplicial map from $\mathsf{L}$ to $\mathsf{H}$. Therefore, we have $\mathrm{C}(\mathsf{L};\mathsf{H})\leq m=\max\{\mathrm{C}(\mathsf{A};\mathsf{H}),\mathrm{C}(\mathsf{B},\mathsf{H})\}$.
\end{proof}

Observe that the condition $V(\mathsf{A})\cap V(\mathsf{B})=\emptyset$ in Proposition~\ref{prop:complexity-disjoint-union} cannot be removed. To illustrate this, consider the $1$-dimensional simplicial complexes $\mathsf{A}$ and $\mathsf{B}$ defined as follows:  $V(\mathsf{A})=\{1,2,3\}$, $F_1(\mathsf{A})=\{12,13\}$, $V(\mathsf{B})=\{2,3\}$ and $F_1(\mathsf{B})=\{23\}$. Note that, $\mathrm{C}(\mathsf{A};\Gamma_2)=1$ and $\mathrm{C}(\mathsf{B};\Gamma_2)=1$. However, since $\mathsf{A}\cup \mathsf{B}=\mathsf{K}_3$, we have $\mathrm{C}(\mathsf{A}\cup \mathsf{B};\mathsf{K}_2)=2$.


\subsection{Lower bound} We have the following lower bound for facet-complexity.

\begin{theorem}[Lower Bound]\label{thm:lower-bound}
 Let $\mathbf{L}$ and $\mathbf{K}$ be simplicial complexes. \begin{enumerate}
     \item[(1)] The inequality \[\chi(\mathbf{L})\leq\chi(\mathbf{K})^{\mathrm{C}(\mathbf{L};\mathbf{K})}\] holds. Equivalently, $\log_{\chi(\mathbf{K})}\chi(\mathbf{L})\leq \mathrm{C}(\mathbf{L};\mathbf{K})$.    
     \item[(2)] Suppose that $\mathbf{K}$ has no isolated vertices. We have \[\mathrm{C}(G_\mathbf{L};G_\mathbf{K})\leq \mathrm{C}(\mathbf{L};\mathbf{K}).\]
 \end{enumerate} 
\end{theorem}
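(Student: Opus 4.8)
The plan is to read off both inequalities from the colouring results already assembled in Proposition~\ref{prop:chromatic-union}, reserving the real care for the graph-theoretic bookkeeping in part (2). For (1) I would fix $m=\mathrm{C}(\mathbf{L};\mathbf{K})$ (assuming it is finite, else there is nothing to prove) and take an optimal quasi-facet simplicial map: subcomplexes $\mathbf{L}_1,\ldots,\mathbf{L}_m$ of $\mathbf{L}$ with $\mathbf{L}=\mathbf{L}_1\cup\cdots\cup\mathbf{L}_m$ and facet simplicial maps $f_i\colon\mathbf{L}_i\stackrel{\mathrm{facet}}{\to}\mathbf{K}$. Proposition~\ref{prop:chromatic-union}(1) applied to each $f_i$ gives $\chi(\mathbf{L}_i)\leq\chi(\mathbf{K})$, and feeding the cover into Proposition~\ref{prop:chromatic-union}(2) yields
\[
\chi(\mathbf{L})\leq\prod_{j=1}^{m}\chi(\mathbf{L}_j)\leq\prod_{j=1}^{m}\chi(\mathbf{K})=\chi(\mathbf{K})^{m}=\chi(\mathbf{K})^{\mathrm{C}(\mathbf{L};\mathbf{K})}.
\]
Taking $\log_{\chi(\mathbf{K})}$ of both sides (when $\chi(\mathbf{K})>1$) gives the equivalent logarithmic form, so (1) is a two-line corollary of the fundamental proposition.

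For (2) the idea is to restrict the \emph{same} optimal cover to the graph $G_\mathbf{L}$ of $1$-dimensional facets and check that the restrictions become graph homomorphisms into $G_\mathbf{K}$. For each $i$, I would let $G_i$ be the subgraph of $G_\mathbf{L}$ whose edge set is $\{F\in E(G_\mathbf{L}):F\in\mathbf{L}_i\}$, with vertex set the union of those edges. Because the union of simplicial complexes satisfies $F_1(\mathbf{L})=\bigcup_i F_1(\mathbf{L}_i)$ and every edge of $G_\mathbf{L}$ is a $1$-dimensional facet of $\mathbf{L}$, each such edge lands in some $\mathbf{L}_i$; hence $G_\mathbf{L}=G_1\cup\cdots\cup G_m$. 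The heart of the argument is that $f_i$ restricts to a graph homomorphism $G_i\to G_\mathbf{K}$: take an edge $F=uv\in E(G_i)$, so $F$ is a $1$-dimensional facet of $\mathbf{L}$ lying in $\mathbf{L}_i\subseteq\mathbf{L}$. By maximality in $\mathbf{L}$, $F$ is also a facet of $\mathbf{L}_i$, hence a non-unitary facet; since $f_i$ is a facet simplicial map, $f_i(F)=\{f_i(u),f_i(v)\}$ is a non-unitary facet of $\mathbf{K}$, and as $2=|F|\geq|f_i(F)|\geq 2$ it is exactly a $1$-dimensional facet of $\mathbf{K}$, i.e.\ an edge of $G_\mathbf{K}$. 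Thus $f_i$ carries edges of $G_i$ to edges of $G_\mathbf{K}$ and endpoints to endpoints, which is precisely a graph homomorphism. Assembling these $m$ homomorphisms over the cover $G_\mathbf{L}=G_1\cup\cdots\cup G_m$ gives $\mathrm{C}(G_\mathbf{L};G_\mathbf{K})\leq m=\mathrm{C}(\mathbf{L};\mathbf{K})$.

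The main obstacle is the bookkeeping in (2) rather than any deep idea: I must be sure that a $1$-dimensional facet of $\mathbf{L}$ sitting inside $\mathbf{L}_i$ really is maximal in $\mathbf{L}_i$ (so that the facet simplicial map hypothesis applies to it), and that its image is pinned to dimension $1$ rather than collapsing to a point or expanding to a larger facet — this is exactly where ``non-unitary facet'' and the cardinality squeeze $|f_i(F)|=2$ do the work. I would also verify the precise role of the hypothesis that $\mathbf{K}$ has no isolated vertices: it is used to guarantee that $G_\mathbf{K}$ is a legitimate target for the graph hom-complexity of \cite{zapata2024} (in particular that the comparison $\mathrm{C}(G_\mathbf{L};G_\mathbf{K})$ matches the conventions there and is not rendered vacuous), and I would confirm that it is invoked consistently with the earlier remarks relating $1$-skeleton facet-complexity to the hom-complexity of underlying graphs.
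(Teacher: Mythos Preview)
Your proposal is correct and follows essentially the same route as the paper: for (1) you combine the two parts of Proposition~\ref{prop:chromatic-union} exactly as the paper does, and for (2) you restrict the same optimal cover to the $1$-dimensional facets and verify that each $f_i$ becomes a graph homomorphism into $G_{\mathbf{K}}$. Your treatment of (2) is in fact more explicit than the paper's---you spell out the maximality-in-$\mathbf{L}_i$ step and the cardinality squeeze $|f_i(F)|=2$, which the paper leaves implicit---and your honest flagging of the role of the ``no isolated vertices'' hypothesis matches the paper's own parenthetical invocation of it.
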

\begin{proof}
 Suppose that $m=\mathrm{C}(\mathbf{L};\mathbf{K})$ and consider $\mathbf{L}_1,\ldots,\mathbf{L}_m$ subcomplexes of $\mathbf{L}$ such that $\mathbf{L}=\mathbf{L}_1\cup\cdots\cup \mathbf{L}_m$, with a facet simplicial map $f_j:\mathbf{L}_j\stackrel{\mathrm{facet}}{\to} \mathbf{K}$ for each $\mathbf{L}_j$.
 \begin{enumerate}
     \item[(1)] Note that $\chi(\mathbf{L}_j)\leq\chi(\mathbf{K})$, see Proposition~\ref{prop:chromatic-union}(1). Then, by Proposition~\ref{prop:chromatic-union}(2), we have: \begin{align*}
        \chi(\mathbf{L})&\leq \prod_{j=1}^m\chi(\mathbf{L}_j)\\
        &\leq \prod_{j=1}^m\chi(\mathbf{K})\\
        &=\chi(\mathbf{K})^m.
    \end{align*} 
    \item[(2)] For each $j=1,\ldots,m$, set the graph $G_j$ given by \begin{align*}
        V(G_j)&= V(G_\mathbf{L})\cap V(\mathbf{L}_j),\\
        E(E_j)&= E(G_\mathbf{L})\cap \mathbf{L}_j.
    \end{align*} Note that each $G_j$ is a subgraph of $G_\mathbf{L}$. Furthermore, $G_\mathbf{L}=G_1\cup\cdots\cup G_m$. In addition, each facet simplicial map $f_j:\mathbf{L}_j\to \mathbf{K}$ restricts to a graph homomorphism $(f_j)_|:G_j\to G_\mathbf{K}$ (recall that $\mathbf{K}$ has no isolated vertices). Therefore, $\mathrm{C}(G_\mathbf{L};G_\mathbf{K})\leq m=\mathrm{C}(\mathbf{L};\mathbf{K})$.
    \end{enumerate} 
\end{proof}

Theorem~\ref{thm:lower-bound} implies the following statement.

\begin{corollary}\label{cor:chro-lower-bound}
Let $m\geq 1$ be an integer. Let $\mathbf{L}$ and $\mathbf{K}$ be simplicial complexes such that $\chi(\mathbf{L})\geq 2$. If $\chi(\mathbf{K})^{m-1}+1\leq \chi(\mathbf{L})$, then $\mathrm{C}(\mathbf{L};\mathbf{K})\geq m$.   
\end{corollary}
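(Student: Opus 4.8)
The plan is to argue by contradiction, with all the substantive work delegated to the exponential lower bound of Theorem~\ref{thm:lower-bound}(1). Suppose, toward a contradiction, that $\mathrm{C}(\mathbf{L};\mathbf{K})\leq m-1$; in particular $\mathrm{C}(\mathbf{L};\mathbf{K})$ is finite, so Theorem~\ref{thm:lower-bound}(1) applies verbatim and yields $\chi(\mathbf{L})\leq \chi(\mathbf{K})^{\mathrm{C}(\mathbf{L};\mathbf{K})}$. The first step is thus simply to record this inequality.

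The second step is to compare exponents. Since the base $\chi(\mathbf{K})$ is at least $1$, the map $t\mapsto \chi(\mathbf{K})^{t}$ is monotone non-decreasing, so from $\mathrm{C}(\mathbf{L};\mathbf{K})\leq m-1$ I would deduce $\chi(\mathbf{K})^{\mathrm{C}(\mathbf{L};\mathbf{K})}\leq \chi(\mathbf{K})^{m-1}$, and therefore $\chi(\mathbf{L})\leq \chi(\mathbf{K})^{m-1}$. This contradicts the standing hypothesis $\chi(\mathbf{K})^{m-1}+1\leq \chi(\mathbf{L})$, which is exactly the strict inequality $\chi(\mathbf{K})^{m-1}<\chi(\mathbf{L})$. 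Hence the assumption fails and $\mathrm{C}(\mathbf{L};\mathbf{K})\geq m$, as claimed; the logarithmic reformulation $\log_{\chi(\mathbf{K})}\chi(\mathbf{L})\leq \mathrm{C}(\mathbf{L};\mathbf{K})$ from Theorem~\ref{thm:lower-bound}(1) makes the same deduction transparent.

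The only delicate point, and the closest thing to an obstacle, is bookkeeping around degenerate cases. In the case $\chi(\mathbf{K})=1$ the monotonicity step is vacuous, but Theorem~\ref{thm:lower-bound}(1) still forces $\chi(\mathbf{L})\leq 1^{\mathrm{C}(\mathbf{L};\mathbf{K})}=1$, contradicting $\chi(\mathbf{L})\geq 2$, so the argument closes uniformly. I would also observe that the hypothesis $\chi(\mathbf{L})\geq 2$ is in fact implied by $\chi(\mathbf{K})^{m-1}+1\leq \chi(\mathbf{L})$ (since $\chi(\mathbf{K})^{m-1}\geq 1$), and that the boundary value $m=1$ is immediate because the facet-complexity is by definition a positive integer, hence always at least $1$. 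Beyond these checks there is no real difficulty: the entire content of the corollary is carried by the exponential estimate of Theorem~\ref{thm:lower-bound}(1).
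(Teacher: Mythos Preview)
Your argument is correct and follows essentially the same route as the paper: assume $\mathrm{C}(\mathbf{L};\mathbf{K})\leq m-1$, apply Theorem~\ref{thm:lower-bound}(1) to get $\chi(\mathbf{L})\leq \chi(\mathbf{K})^{m-1}$, and derive a contradiction with the hypothesis. Your treatment is slightly more thorough than the paper's in that you explicitly address the degenerate cases $\chi(\mathbf{K})=1$ and $m=1$ and observe that the hypothesis $\chi(\mathbf{L})\geq 2$ is redundant.
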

\begin{proof}
 The case $m=1$ is straightforward. Let $m\geq 2$. Suppose that $\mathrm{C}(\mathbf{L};\mathbf{K})\leq m-1$. By Theorem~\ref{thm:lower-bound}, we have \begin{align*}
    \chi(\mathbf{L})&\leq \chi(\mathbf{K})^{\mathrm{C}(\mathbf{L};\mathbf{K})}\\
        &\leq\chi(\mathbf{K})^{m-1}.
    \end{align*} This leads to a contradiction, as $\chi(\mathbf{K})^{m-1}+1\leq \chi(\mathbf{L})$.
\end{proof}


\subsection{Upper bound} Let $\mathsf{L}$ be a simplicial complex. Let \[\eta(\mathsf{L})=|\{F\in\mathsf{L}:~\text{ $F$ is a facet of $\mathsf{L}$}\}|,\] the number of facets of $\mathsf{L}$. We have the following statement.

\begin{proposition}\label{prop:upper-bound-facets}
 Let $\mathsf{L}$ be a simplicial complex. For a fixed complete simplicial complex $\mathsf{K}=2^V$ with $|V|\geq 2$, if $|F|\geq |V|$ for any facet $F$ in $\mathsf{L}$, then \[\mathrm{C}(\mathsf{L};\mathsf{K})\leq \eta(\mathsf{L}).\]  
\end{proposition}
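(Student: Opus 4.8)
The plan is to cover $\mathsf{L}$ by the closures of its facets and to verify that each such closure is facet $\mathsf{K}$-colourable. Concretely, for each facet $F$ of $\mathsf{L}$ I would consider the subcomplex $2^F=\{G:\ G\subseteq F\}$, i.e.\ the complete simplicial complex on the vertex set $F$; it is a subcomplex of $\mathsf{L}$ because $F\in\mathsf{L}$ and $\mathsf{L}$ is closed under taking faces. Since every simplex of $\mathsf{L}$ lies in at least one facet, these subcomplexes satisfy $\mathsf{L}=\bigcup_{F}2^F$, where $F$ ranges over the facets of $\mathsf{L}$. Distinct facets give distinct subcomplexes (neither of two distinct maximal faces is contained in the other), so this is a cover by exactly $\eta(\mathsf{L})$ subcomplexes. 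It then suffices to exhibit, for each facet $F$, a facet simplicial map $2^F\stackrel{\mathrm{facet}}{\to}\mathsf{K}$, for Definition~\ref{defn:complexity} will immediately give $\mathrm{C}(\mathsf{L};\mathsf{K})\leq\eta(\mathsf{L})$.

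The key structural observation is that $2^F$ has a unique facet, namely $F$ itself, and that the complete complex $\mathsf{K}=2^V$ has a unique facet, namely $V$. Because $|V|\geq 2$ and $|F|\geq|V|$, both $F$ and $V$ are non-unitary. I would therefore fix any surjection $f\colon F\to V$, which exists precisely because the hypothesis guarantees $|F|\geq|V|$. Since $\mathsf{K}=2^V$ is complete, the image $f(G)$ of every face $G\subseteq F$ is automatically a simplex of $\mathsf{K}$; hence $f$ is a simplicial map $2^F\to\mathsf{K}$ with no additional condition to check.

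It remains to confirm that this $f$ is in fact a facet simplicial map. The only non-unitary facet of $2^F$ is $F$, and by surjectivity $f(F)=V$, which is exactly the unique non-unitary facet of $\mathsf{K}$. Thus $f$ preserves non-unitary facets, so $2^F\stackrel{\mathrm{facet}}{\to}\mathsf{K}$ for every facet $F$ of $\mathsf{L}$. Combining this with the cover $\mathsf{L}=\bigcup_{F}2^F$ completes the argument.

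I do not anticipate a genuine obstacle: the proposition is essentially the packaging of two elementary facts, that the facet closures cover $\mathsf{L}$ and that a surjection $F\to V$ exists whenever $|F|\geq|V|$. The only point demanding care is to record that $V$ is the unique facet of $2^V$ and that it is non-unitary (using $|V|\geq 2$), so that sending the facet $F$ onto $V$ is enough to satisfy the facet-preservation requirement.
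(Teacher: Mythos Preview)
Your proposal is correct and follows essentially the same approach as the paper: cover $\mathsf{L}$ by the closures $\mathsf{L}_F=2^F$ of its facets, and send each $F$ onto $V$ via a surjection (which the paper constructs explicitly from an injection $V\hookrightarrow F$, whereas you simply invoke its existence). The structural content is identical.
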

\begin{proof}
    For each facet $F$ of $\mathsf{L}$, let us denote the underlying subcomplex of $F$ by $\mathsf{L}_F$, that is, \[\mathsf{L}_F=\{S\in \mathsf{L}:~S\subseteq F\}.\] Notice that $F$ is the only facet of $\mathsf{L}_F$, $V(\mathsf{L}_F)=F$, and $\mathsf{L}=\bigcup_{\eta(\mathsf{L})}\mathsf{L}_F$. 

    For any facet $F$ in $\mathsf{L}$, if $|F|\geq |V|$ there exists an injection $j:V\to F$. One can construct a facet simplicial map as follows. 
    
    First of all, let $v_0\in V$. Then, define $f: \mathsf{L}_F\to \mathsf{K}$, for $x\in F$, \[f(x)=\begin{cases}
        v,&\hbox{ if $x=j(v)$ for some $v\in V$;}\\
        v_0,&\hbox{ if $x\neq j(v)$ for all $v\in V$}.
    \end{cases}\] Notice that the $v$ in the first case is uniquely determined because $j$ is injective. Hence, $f$ is a simplicial map (here we use the fact that $\mathsf{K}$ is complete) and $f(F)=V$. Therefore, $f$ is a facet simplicial map. So, it follows that $\mathrm{C}(\mathsf{L};\mathsf{K})\leq \eta(\mathsf{L})$.
\end{proof}

 Note that if $|V|=2$, then the condition that $|F|\geq |V|$ for any facet $F$ in $\mathsf{L}$ corresponds to the condition that $\mathsf{L}$ has no isolated vertices. Hence, we have the following corollary.

\begin{corollary} If $\mathsf{L}$ has no isolated vertices, then \[\mathrm{C}(\mathsf{L};\Gamma_2)\leq \eta(\mathsf{L}).\]
\end{corollary}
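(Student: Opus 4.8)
The plan is to prove the corollary as a direct specialization of Proposition~\ref{prop:upper-bound-facets} to the case $\mathsf{K}=\Gamma_2$. First I would observe that $\Gamma_2$ is precisely the complete simplicial complex $2^V$ on the vertex set $V=\{0,1\}$ (up to relabelling), so $|V|=2$ and the hypothesis $|V|\geq 2$ required by the proposition is satisfied. This identifies the target complex with a complete complex, which is the only structural assumption the proposition places on $\mathsf{K}$.

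The key step is to verify that the hypothesis of the corollary, namely that $\mathsf{L}$ has no isolated vertices, is equivalent to the hypothesis of the proposition when $|V|=2$, namely that $|F|\geq |V|=2$ for every facet $F$ of $\mathsf{L}$. I would argue as follows: a vertex $v$ is isolated precisely when $\{v\}$ is a facet (its only face is itself, so it is maximal and of cardinality one), equivalently when $\mathsf{L}$ possesses a facet $F$ with $|F|=1$. Hence $\mathsf{L}$ has no isolated vertices if and only if every facet $F$ satisfies $|F|\geq 2$, which is exactly the condition $|F|\geq |V|$ in this setting. This equivalence is already flagged in the sentence immediately preceding the corollary, so the argument amounts to recording it explicitly.

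With both hypotheses matched, I would simply invoke Proposition~\ref{prop:upper-bound-facets} with $\mathsf{K}=\Gamma_2$ to conclude $\mathrm{C}(\mathsf{L};\Gamma_2)\leq \eta(\mathsf{L})$. I do not anticipate any genuine obstacle here: the corollary is a routine instantiation, and the only point requiring a word of justification is the equivalence between ``no isolated vertices'' and ``every facet has at least two vertices,'' which rests on the elementary observation that a singleton is a facet exactly when its vertex has degree zero. No new combinatorial construction is needed beyond what the proposition already supplies.

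\begin{proof}
Take $\mathsf{K}=\Gamma_2=2^V$ with $V=\{0,1\}$, so that $|V|=2\geq 2$. Since a vertex $v$ of $\mathsf{L}$ is isolated if and only if the singleton $\{v\}$ is a facet of $\mathsf{L}$ (equivalently, a facet of cardinality $1$), the assumption that $\mathsf{L}$ has no isolated vertices is equivalent to requiring $|F|\geq 2=|V|$ for every facet $F$ of $\mathsf{L}$. Thus the hypotheses of Proposition~\ref{prop:upper-bound-facets} are satisfied, and that proposition yields $\mathrm{C}(\mathsf{L};\Gamma_2)\leq \eta(\mathsf{L})$.
\end{proof}
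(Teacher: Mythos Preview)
Your proof is correct and follows exactly the paper's approach: the paper derives the corollary immediately from Proposition~\ref{prop:upper-bound-facets} by noting (in the sentence just before the corollary) that when $|V|=2$ the condition $|F|\geq |V|$ for every facet reduces to $\mathsf{L}$ having no isolated vertices. Your write-up simply makes this observation explicit.
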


Recall that a subcomplex $\mathsf{K}$ of $\mathsf{H}$ is said to be a facet subcomplex if any facet of $\mathsf{K}$ is a facet of $\mathsf{H}$. We have the following result.

\begin{theorem}[Upper Bound]\label{thm:uper-bound} For any simplicial complexes $\mathsf{L}$ and $\mathsf{H}$. Let $G_0$ be a facet of $\mathsf{H}$ such that $2\leq |G_0|\leq |G|$ for any facet $G$ in $\mathsf{H}$. One of the following holds.
\begin{enumerate}
\item[(1)] $\mathrm{C}(\mathsf{L};\mathsf{H})\leq \eta(\mathsf{L})$, if $|F|\geq |G_0|$ for any facet $F$ in $\mathsf{L}$.
\item[(2)] $\mathrm{C}(\mathsf{L};\mathsf{H})=\infty$, if otherwise. 
\end{enumerate}
\end{theorem}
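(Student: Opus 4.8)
The plan is to split into the two stated cases and, in each, reduce to the machinery already established. The key observation is that the theorem is really a dichotomy governed by a single necessary condition for the \emph{existence} of any facet simplicial map out of a facet subcomplex of $\mathsf{L}$: namely, the dimension (equivalently size) of the smallest facet must not exceed what $\mathsf{H}$ can accommodate. I would first recall from Remark~\ref{rem:def-ob}(5) that $\mathrm{C}(\mathsf{L};\mathsf{H})<\infty$ forces $\mathsf{H}$ to contain a non-unitary facet of dimension $d'\le d$ whenever $\mathsf{L}$ has a non-unitary facet of dimension $d$. Since $G_0$ is a facet of $\mathsf{H}$ of minimal size among facets, any facet $F$ of a subcomplex $\mathsf{L}_i$ that is itself a facet of $\mathsf{L}$ must map to a facet of $\mathsf{H}$, and such an image has size at least $|G_0|$; hence a facet simplicial map can only send $F$ to a facet of $\mathsf{H}$ when $|F|\ge |G_0|$.

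For case (1), I would argue exactly as in Proposition~\ref{prop:upper-bound-facets}. For each facet $F$ of $\mathsf{L}$, take the underlying subcomplex $\mathsf{L}_F=\{S\in\mathsf{L}:S\subseteq F\}$, so that $F$ is the unique facet of $\mathsf{L}_F$, $V(\mathsf{L}_F)=F$, and $\mathsf{L}=\bigcup_{F}\mathsf{L}_F$ over the $\eta(\mathsf{L})$ facets. Because $|F|\ge|G_0|$, there is an injection $j:G_0\hookrightarrow F$. Fixing a vertex $w_0\in G_0$, define $f:\mathsf{L}_F\to\mathsf{H}$ by $f(x)=v$ if $x=j(v)$ for the (unique, by injectivity) $v\in G_0$, and $f(x)=w_0$ otherwise. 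Every simplex $S\subseteq F$ then maps into $G_0$, which is a simplex of $\mathsf{H}$ since $G_0$ is a face of the complete complex it spans; and $f(F)=G_0$, a (non-unitary, as $|G_0|\ge2$) facet of $\mathsf{H}$. Thus each $f$ is a facet simplicial map $\mathsf{L}_F\stackrel{\mathrm{facet}}{\to}\mathsf{H}$, the collection $\{f\}_F$ is a quasi-facet simplicial map, and $\mathrm{C}(\mathsf{L};\mathsf{H})\le\eta(\mathsf{L})$.

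For case (2), suppose the hypothesis of (1) fails, i.e. there is a facet $F^\ast$ of $\mathsf{L}$ with $|F^\ast|<|G_0|$. I would show $\mathrm{C}(\mathsf{L};\mathsf{H})=\infty$ by showing no quasi-facet simplicial map can exist. In any decomposition $\mathsf{L}=\mathsf{L}_1\cup\cdots\cup\mathsf{L}_k$ with each $\mathsf{L}_i$ facet $\mathsf{H}$-colourable, the facet $F^\ast$ of $\mathsf{L}$ is also a facet of whichever $\mathsf{L}_i$ contains it (a maximal face of $\mathsf{L}$ is maximal in any subcomplex containing it). A facet simplicial map on that $\mathsf{L}_i$ must send the non-unitary facet $F^\ast$ to a non-unitary facet of $\mathsf{H}$, which has size at least $|G_0|>|F^\ast|$; but a simplicial map cannot increase the size of a simplex, since $f(F^\ast)$ has at most $|F^\ast|$ vertices. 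This contradiction shows no such $\mathsf{L}_i$ exists for \emph{any} $k$, so no finite decomposition works and $\mathrm{C}(\mathsf{L};\mathsf{H})=\infty$.

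The main obstacle, and the point I would state carefully, is the reduction in case (2): I must be sure that the offending small facet $F^\ast$ of $\mathsf{L}$ remains a facet (maximal face) in any subcomplex $\mathsf{L}_i$ that contains it, and that facet simplicial maps genuinely preserve non-unitary facets so that $F^\ast$ is forced to an image of size $\ge|G_0|$. The size-monotonicity $|f(F^\ast)|\le|F^\ast|$ for a vertex map is immediate, so the contradiction is clean once the facet-preservation is invoked; I would flag that this is precisely where the minimality hypothesis $|G_0|\le|G|$ for all facets $G$ of $\mathsf{H}$ is used.
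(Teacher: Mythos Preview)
Your proposal is correct and follows essentially the same argument as the paper. The only cosmetic difference is in case~(1): the paper factors through the complete subcomplex $\mathsf{H}_{G_0}=2^{G_0}$, invoking Proposition~\ref{prop:upper-bound-facets} for $\mathrm{C}(\mathsf{L};\mathsf{H}_{G_0})\le\eta(\mathsf{L})$ and then Theorem~\ref{prop:complexity-subgraphs} for $\mathrm{C}(\mathsf{L};\mathsf{H})\le\mathrm{C}(\mathsf{L};\mathsf{H}_{G_0})$, whereas you build the maps $\mathsf{L}_F\to\mathsf{H}$ directly with image in $G_0$; this is the same construction, just without the intermediate step. In case~(2) your argument is identical to the paper's, and in fact you are more explicit about the point that a facet $F^\ast$ of $\mathsf{L}$ remains a facet of any subcomplex $\mathsf{L}_i$ containing it, which the paper uses without comment.
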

\begin{proof}
    \noindent\begin{enumerate}
        \item[(1)]  For the facet $G_0$ of $\mathsf{H}$, one can define the facet (complete) subcomplex $\mathsf{H}_{G_0}=2^{G_0}$ of $\mathsf{H}$ as in Proposition~\ref{prop:upper-bound-facets}. By Theorem~\ref{prop:complexity-subgraphs}, we have $\mathrm{C}(\mathbf{L};\mathbf{H})\leq \mathrm{C}(\mathbf{L};\mathsf{H}_{G_0})$. Furthermore, by Proposition~\ref{prop:upper-bound-facets}, $\mathrm{C}(\mathbf{L};\mathsf{H}_{G_0})\leq\eta(\mathsf{L})$. Hence, we obtain $\mathrm{C}(\mathsf{L};\mathsf{H})\leq \eta(\mathsf{L})$.
        \item[(2)] Without loss of generality, suppose that there is only one facet in $\mathsf{L}$ whose cardinality is strictly less than $|G_0|$ and name it $F_1$. 

Suppose that $\mathrm{C}(\mathsf{L};\mathsf{H})=k<\infty$. That is, there are subcomplexes $\mathsf{L}_1,\ldots, \mathsf{L}_k$ of $\mathsf{L}$ such that $\mathsf{L}=\bigcup_{i=1}^{k} \mathsf{L}_i$ and for all $i=1,\ldots, k$, $f_i:\mathsf{L}_i\to \mathsf{H}$ is a facet simplicial map.

Since $\mathsf{L}_i$'s cover $\mathsf{L}$, then at least one $\mathsf{L}_i$ must contain $F_1$. Without loss of generality, say that $\mathsf{L}_1$ is the only subcomplex containing $F_1$. We assumed that there is a facet simplicial map $f_1: \mathsf{L}_1\to \mathsf{H}$. Then, all the facets of $L_1$ must be mapped to a facet of $\mathsf{H}$. But $F_1$ cannot be mapped to a facet of $\mathsf{H}$ as $|F_1| < |G|$ for any facet $G$ of $H$. This completes the proof. 
    \end{enumerate}
\end{proof}

A simplicial complex $\mathsf{L}$ is said to be \textit{pure} if $\dim \mathsf{L}<\infty$ and every facet is of dimension $\dim \mathsf{L}$. For the injective complexity, we have the following result.

\begin{proposition}\label{prop:k-complete-facet} Let $\mathsf{L}$ be a simplicial complex such that $\eta(\mathsf{L})<\infty$. For a fixed complete simplicial complex $\mathsf{K}=2^V$ with $|V|\geq 2$, we have \[\eta(\mathsf{L})\leq \mathrm{IC}(\mathsf{L};\mathsf{K}).\] Furthermore, the equality holds whenever $\mathsf{L}$ is a pure simplicial complex with $\dim \mathsf{L}=\dim \mathsf{K}$.
\end{proposition}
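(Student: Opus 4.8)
The plan rests on the observation that the complete complex $\mathsf{K}=2^V$ has a single facet, namely $V$ itself, which is non-unitary since $|V|\geq 2$. Consequently any facet simplicial map $g:\mathsf{M}\to\mathsf{K}$ must send every non-unitary facet of $\mathsf{M}$ onto $V$; if moreover $g$ is injective, then the restriction $g\vert_{F}\colon F\to V$ of $g$ to such a facet $F$ is an injection with $g(F)=V$, which forces $|F|=|V|$. I would record this as the basic lemma underlying both bounds. Note in particular that if some non-unitary facet of $\mathsf{L}$ has cardinality different from $|V|$, then no injective facet map can be defined on any piece containing it, so $\mathrm{IC}(\mathsf{L};\mathsf{K})=\infty$ and the asserted inequality is immediate.

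For the lower bound, suppose $k=\mathrm{IC}(\mathsf{L};\mathsf{K})<\infty$ and fix an optimal injective quasi-facet simplicial map $\{f_i:\mathsf{L}_i\to\mathsf{K}\}_{i=1}^k$ with $\mathsf{L}=\mathsf{L}_1\cup\cdots\cup\mathsf{L}_k$. Each non-unitary facet $F$ of $\mathsf{L}$ lies in some $\mathsf{L}_i$, and since $\mathsf{L}_i\subseteq\mathsf{L}$ and $F$ is maximal in $\mathsf{L}$, it is also a facet of $\mathsf{L}_i$; hence $f_i(F)=V$. The crucial point is that one piece cannot absorb two distinct facets: if $F\neq F'$ were both non-unitary facets of $\mathsf{L}$ contained in the same $\mathsf{L}_i$, then $f_i(F)=f_i(F')=V$ would contradict the injectivity of $f_i$ (concretely, $f_i^{-1}(V)\cap V(\mathsf{L}_i)$ is forced to be a single set). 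Therefore assigning to each non-unitary facet $F$ an index $i$ with $F\in\mathsf{L}_i$ is an injection from the set of non-unitary facets into $\{1,\dots,k\}$, which gives the bound. In the setting of the \emph{furthermore} clause below every facet is non-unitary, so this counts all $\eta(\mathsf{L})$ facets and yields $\eta(\mathsf{L})\leq\mathrm{IC}(\mathsf{L};\mathsf{K})$.

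For the equality statement, assume $\mathsf{L}$ is pure with $\dim\mathsf{L}=\dim\mathsf{K}=|V|-1$, so that every facet $F$ satisfies $|F|=|V|$ and is non-unitary. For each facet $F$ set $\mathsf{L}_F=2^{F}$, the full simplex on $F$; this is a subcomplex of $\mathsf{L}$ because $F\in\mathsf{L}$, and purity guarantees that every simplex of $\mathsf{L}$ is contained in some facet, whence $\mathsf{L}=\bigcup_{F}\mathsf{L}_F$, a union of exactly $\eta(\mathsf{L})$ subcomplexes. Choosing for each $F$ a bijection $F\to V$ extends to an injective simplicial map $\mathsf{L}_F=2^{F}\to 2^{V}=\mathsf{K}$ whose unique non-unitary facet $F$ is carried onto $V$; this is therefore an injective facet simplicial map. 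Hence $\mathrm{IC}(\mathsf{L};\mathsf{K})\leq\eta(\mathsf{L})$, and together with the lower bound this gives $\mathrm{IC}(\mathsf{L};\mathsf{K})=\eta(\mathsf{L})$.

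The step I expect to be the main obstacle is the lower bound, and specifically its injectivity argument: one must verify carefully that a non-unitary facet of $\mathsf{L}$ remains a facet (maximal simplex) of the piece $\mathsf{L}_i$ containing it, and that injectivity of $f_i$ genuinely forbids two distinct facets of $\mathsf{L}$ from sitting inside one piece — it is the equality $f_i^{-1}(V)=F$ that makes the map facet $\mapsto$ piece injective. One should also be attentive to the role of isolated vertices, since a unitary facet $\{v\}$ imposes no facet constraint and several such vertices could in principle be bundled into one injective piece; this is why the clean count of all $\eta(\mathsf{L})$ facets is guaranteed precisely in the pure case, where no facet is unitary. By contrast, the equality (upper bound) is a direct construction once purity is invoked to secure $\mathsf{L}=\bigcup_{F}2^{F}$.
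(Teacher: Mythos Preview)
Your argument is correct and follows the same strategy as the paper: a pigeonhole argument for the lower bound (two distinct facets landing in one piece $\mathsf{L}_j$ would both be sent onto $V$ by the injective $f_j$, a contradiction) and, for the upper bound in the pure case, the decomposition $\mathsf{L}=\bigcup_F 2^F$ together with bijections $F\to V$. Your explicit caution about unitary facets is in fact more careful than the paper's write-up, which asserts $f_j(F)=V=f_j(G)$ for arbitrary facets without noting that this step needs $F,G$ to be non-unitary; so the paper's proof of the general inequality tacitly assumes $\mathsf{L}$ has no isolated vertices, and your remark that the clean count of all $\eta(\mathsf{L})$ facets is guaranteed precisely in the pure setting is well taken.
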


\begin{proof}
If $\eta(\mathsf{L})=1$, the inequality $\eta(\mathsf{L})\leq \mathrm{IC}(\mathsf{L};\mathsf{K})$ always holds. Hence, we assume $\eta(\mathsf{L})\geq 2$. 

We will check that $\eta(\mathsf{L})\leq \mathrm{IC}(\mathsf{L};\mathsf{K})$. By contradiction, assume $\mathrm{IC}(\mathsf{L};\mathsf{K})\leq \eta(\mathsf{L})-1:=k$. Then, there exist subcomplexes $\mathsf{L}_1,\ldots, \mathsf{L}_k$ of $\mathsf{L}$ such that $\mathsf{L}=\bigcup_{i=1}^{k} \mathsf{L}_i$ and for all $i=1,\ldots, k$, $f_i:\mathsf{L}_i\to \mathsf{K}$ is an injective facet simplicial map. Since $\eta(\mathsf{L})=k+1$ there exist two different facets $F$ and $G$ of $\mathsf{L}$ such that $F,G\in \mathsf{L}_j$ for some $j\in\{1,\ldots,k\}$. Notice that $F$ and $G$ are facets of $\mathsf{L}_j$, then $f_j(F)=V=f_j(G)$, and thus $F=G$, which is a contradiction. Therefore, we conclude $\eta(\mathsf{L})\leq \mathrm{IC}(\mathsf{L};\mathsf{K})$.   

On the other hand, let $\eta(\mathsf{L})=m$. For each facet $F$ of $\mathsf{L}$, one can define the facet subcomplex $\mathsf{L}_F$ as in Proposition~\ref{prop:upper-bound-facets}. If $\mathsf{L}$ is a pure simplicial complex with $\dim \mathsf{L}=\dim \mathsf{K}$, then $|F| = |V|<\infty$ for each facet $F$ of $\mathsf{L}$. So, similarlly as in Proposition~\ref{prop:upper-bound-facets}, there is an injective facet simplicial map $f:\mathsf{L}_F\to \mathsf{K}$. Thus, we obtain that $\mathrm{IC}(\mathsf{L};\mathsf{K})\leq \eta(\mathsf{L})$ which completes the proof.

\end{proof}

Finally, we propose the following future work.

\begin{remark}[Future Work]\label{rem:future-work}
 \noindent\begin{enumerate}
     \item[(1)] Based on Remark~\ref{rem:ineq-complex-q}, we propose the following question: Do the equalities \[\text{C}_s(\mathsf{L};\mathsf{K})=\cdots= \text{C}_s(\mathsf{L}^{(2)};\mathsf{K}^{(2)})=\text{C}(\mathsf{L}^{\ast};\mathsf{K}^{\ast})\] and \[\text{IC}_s(\mathsf{L};\mathsf{K})=\cdots= \text{IC}_s(\mathsf{L}^{(2)};\mathsf{K}^{(2)})=\text{IC}(\mathsf{L}^{\ast};\mathsf{K}^{\ast})\] always hold for any simplicial complexes  $\mathsf{L}$ and $\mathsf{K}$?
     \item[(2)] Let $\mathsf{L}$ and $\mathsf{K}$ be simplicial complexes. Given a facet simplicial map $f:\mathsf{K}\to \mathsf{L}$, we have \[\mathrm{C}(\mathsf{L};\mathsf{K})\leq \mathrm{IC}(\mathsf{L};\mathsf{K})\leq \text{sec}(f).\] Here, $\text{sec}(f)$ denotes the sectional number of $f$ which is a higher version of the sectional number of a group homomorphism introduced in \cite{zapata2023}. Specifically, $\text{sec}(f)$ is the least positive integer $k$ such that there exist facet subcomplexes $\mathsf{L}_1\ldots,\mathsf{L}_k$ of $\mathsf{L}$ with $\mathsf{L}=\mathsf{L}_1\cup\cdots\cup \mathsf{L}_k$, and for each $\mathsf{L}_i$, there exists a facet simplicial map $\sigma_i:\mathsf{L}_i\to \mathsf{K}$ such that $f\circ\sigma_i=\mathrm{incl}_{\mathsf{L}_i}$ (and thus each $\sigma_i:\mathsf{L}_i\to \mathsf{K}$ is an injective facet simplicial map), where $\mathrm{incl}_{\mathsf{L}_i}:\mathsf{L}_i\hookrightarrow \mathsf{L}$ is the inclusion facet simplicial map. We propose studying this notion of sectional number further.  
\end{enumerate}   
\end{remark}

\section*{Acknowledgment}
The first author would like to thank grants \#2022/16695-7 and \#2023/16525-7 from the S\~{a}o Paulo Research Foundation (\textsc{fapesp}) for financial support.

\section*{Conflict of Interest Statement}
The authors declare that there are no conflicts of interest.

\bibliographystyle{plain}

\end{document}